\documentclass[12pt]{amsart}

\usepackage{amsmath,amssymb,amsfonts}
\usepackage{graphicx,color}

\newtheorem{theorem}{Theorem}[section]

\theoremstyle{definition}
\newtheorem{definition}[theorem]{Definition}

\newtheorem{prop}[theorem]{Proposition}
\newtheorem{cor}[theorem]{Corollary}

\theoremstyle{remark}
\newtheorem{remark}[theorem]{Remark}


\newcommand{\Zeta}{\mathbb{Z}}
\newcommand{\R}{\mathbb{R}}
\newcommand{\C}{\mathbb{C}}
\newcommand{\Q}{\mathcal{Q}}

\newcommand{\w}{\omega}
\newcommand{\f}{\widetilde{f}}
\newcommand{\J}{\mathbb{J}_\Gamma}
\newcommand{\V}{\mathcal{V}}
\newcommand{\W}{\mathcal{W}}


\hyphenation{ana-ly-sis} \hyphenation{pro-duct}
\begin{document}
\sloppy
\title[Crystal Multiwavelets in $L^2(\mathbb{R}^d)$  \, ]{
 Crystallographic Multiwavelets in $L^2(\mathbb{R}^d)$ }

\author[U.~Molter]{Ursula Molter}
\address{Ursula Molter,
Departamento de Matem\'atica, Facultad de Ciencias Exactas y Naturales, Universidad de Buenos Aires, Ciudad Universitaria, Pabellon I, 1428 Capital Federal, Argentina and IMAS, CONICET, Argentina.}
\email{umolter@dm.uba.ar}

\author[A.~Quintero]{Alejandro Quintero}
\address{Alejandro Quintero,
Departamento de Matem\'atica, Facultad de Ciencias Exactas y
  Naturales,  Universidad Nacional de Mar del Plata, Funes 3350, 7600
  Mar del Plata, Argentina}
\email{aquinter@mdp.edu.ar}
\thanks{Partially supported by UBACyT 20020130100403BA	 and ANPCyT PICT2014-1480}

\keywords{Crystal groups, Wavelets, Multiresolution
Analysis, Refinement equations.}
\subjclass[2010]{Primary 42C40, Secondary 52C22, 20H15}

\begin{abstract}
We characterize the scaling function of a crystal Multiresolution Analysis in terms of the vector-scaling function for a Multiresolution Analysis associated to a lattice. We give necessary and sufficient conditions in terms of the symbol matrix in order that an associated crystal wavelet basis exists.
\end{abstract}

 \maketitle

\section{Introduction}

From the early introduction of wavelets it is apparent that the tiling
properties of the group of translations play a central role. The main idea
in wavelet-systems, is to move (translate) a \emph{small wave} throughout
the space in such a way that every point is reached. In order to obtain
reproducing systems, dilations of the wave are introduced. Precisely, a
\emph{wavelet system} of multiplicity $L$ in $L^2(\mathbb{R}^d)$ is a set of
functions $\{\psi^1, \dots, \psi^L\}$, and a dilation matrix $a \in \mathbb{R%
}^{d\times d}$, such that
\begin{equation*}
\left\{D^j_a\tau_k \psi^i: k\in\mathbb{Z}^d, j \in \mathbb{Z}, i=1, \dots,
L\right\}
\end{equation*}
is an orthonormal basis for $L^2(\mathbb{R}^d)$, where $D_a$ is the unitary
dilation operator in $L^2(\mathbb{R}^d)$, and $\tau_k$ is the translation by
$k \in \mathbb{Z}^d$.

Mallat \cite{M} and Meyer \cite{Me} provided the first systematic way to
construct orthonormal wavelet bases of $L^2(\mathbb{R})$ using the concept
of Multiresolution Analysis (MRA). They showed that for every MRA there
exists an associated orthonormal wavelet basis. The rich structure of MRA is
generated by another function (the scaling function) that satisfies a
certain self-similarity condition. The problem of constructing orthonormal
wavelet was then shifted to the problem of constructing MRAs. Using this
structure, Daubechies \cite{D} was able to prove the existence of compactly
supported orthonormal wavelets with arbitrary regularity on the line. After
these results, the theory was generalised into different directions.

In this sense, \emph{crystal wavelets} and its associated \emph{%
crystal multiresolution analyses} are a natural generalization of
classical wavelets and multiresolution analyses. In these systems, a
crystal group $\Gamma$ takes the place of the translations in
classical wavelets. To be more precise,  we have the following
definition \cite{alfcar,MT}:

\begin{definition}
Given a crystal group (see
Definition \ref{defi-crista} below) $\Gamma\subset \mathrm{Isom}(\mathbb{R}^d)$
and an expanding matrix $\mbox{$a:\R^d\rightarrow\R^d$}$; (i.e. all
eigenvalues of $a$, have modulus greater
than $1$), such that $a\Gamma a^{-1}\subset\Gamma$,   a \emph{crystal multiwavelet of multiplicity} $L$ (wavelet, for $L=1
$), is a set of functions $\Psi=\{\psi^i:i=1, \dots, L\}\subset L^2(\mathbb{R%
}^d)$ such that
\begin{equation*}
\Psi_{(\Gamma,a)}=\{D^j_aD_\gamma \psi^i:j\in\mathbb{Z}, \gamma\in\Gamma,
i=1, \dots, L\},
\end{equation*}
is an orthonormal basis for $L^2(\mathbb{R}^d)$, where for any invertible affine map $\gamma$, $D_\gamma:L^2(%
\mathbb{R}^d)\rightarrow L^2(\mathbb{R}^d)$ is the unitary operator $D_\gamma f(x) :=|\det {\rm lin}(\gamma)|^{-\frac{1}{2}}f(\gamma^{-1}x)$,
where ${\rm lin}(\gamma)$ is the linear part of $\gamma$.

 We will also say that $\Psi$
is a $(\Gamma, a)$-crystal multiwavelet (or wavelet) system.
\end{definition}

The action by elements of crystal groups offers a richer geometrical
structure than using only translations. This new geometric freedom allows us to
obtain basis elements supported on sets with different shapes and directions
than classical wavelets. These are desirable properties for bases used in
representation of multidimensional functions \cite{GLLWW}.


The theory of crystal wavelets is related to the theory of
Composite Dilation Wavelets (CDW), which have been introduced and studied in
a series of papers by K. Guo, D. Labate, W. Lim, G. Weiss and E. Wilson (see for  example  \cite{GLLWW}). A particular class of CDW, are the so called \emph{composite $AB$ wavelets}. They are defined as follows:
let $\mathcal{L}\subset \mathbb{R}^{d}$ be a full rank lattice, $A$, $B$ countable subsets
of $GL_{d}(\mathbb{R})$. Then $\Psi =\{\psi ^{i}:i=1,\dots ,m\}$ is a
composite $AB$ wavelet if the affine reproducing system $\mathcal{A}_{AB}(\psi )=\{D_{a}D_{b}T_{k}\phi \ : \ k\in \mathbb{Z}%
^{d},b\in B,a\in A\},$ is a basis or frame for $L^{2}(\mathbb{R}^{d}).$ Results on existence of these wavelets were obtained in \cite{BK,BS,KRWW} for the particular case in which $A=\{a^{j}:j\in\mathbb{Z}\}$, where $a$ is an expansive matrix, and $B$ is a finite group satisfying the crystal
condition $B\mathcal{L}=\mathcal{L}$.

Using Zassenhaus's Theorem \cite{Z} one
can show that $\mathcal{A}_{AB}(\psi)$ is in fact a special case of crystal wavelet systems. Therefore the study of crystal wavelet systems includes these composite $AB$ wavelets.

The aim of the present paper is to construct crystal multiresolution analyses (Definition \ref{def MRA}) and prove the existence of crystal wavelets associated to these constructions.

Our first result (Theorem~\ref{relacion}) relates the property of being $\Gamma -$%
refinable (Definition \ref{def ref}) for an arbitrary splitting
crystallographic group to being refinable with respect to a translation
group. This relation enables us to characterize all compactly supported function with
orthonormal $\Gamma -$translates that generate a crystal
multiresolution analysis of $L^{2}(\mathbb{R}^{d})
$, in terms of multiscaling vectors using only translates on the lattice $\Lambda$. In particular, any such a function is a solution of a crystal refinement equation.

We then show (Theorem~\ref{teorema}) when there exist corresponding \emph{crystal
multiwavelets} whose dilates and $\Gamma-$translates form an orthonormal
basis for $L^2(\mathbb{R}^d),$ providing a method for generating orthonormal
bases of $L^2(\mathbb{R}^d)$ associated to a crystal  group.


Let us point out that in \cite{BK}, Theorem 1, using methods from noncommutative abstract harmonic analysis, the problem of the existence of wavelet systems associated to general noncommutative groups has been completely settled. This result has then be specialized in \cite{MT} to the case of crystal groups. In both cases the proof relies on the cancelation property for a finite Von Neumann
Algebra which does not immediately yield a method to exhibit such a system. In fact, in \cite{MT} only some particular examples are constructed.

We instead use classical Fourier Analysis results and are able to provide tools to find the wavelet systems explicitly. For the special case of AB composite wavelets in \cite{BK} a similar approach was used.

\section{Crystal Groups}

Crystal groups (or space groups) are groups of isometries of $\R^d$, that generalize the notion of translations, to allow for different (rigid) movements in $\R^d$. Precisely:
\begin{definition}\label{defi-crista}
A \emph{crystal group} is a discrete subgroup $\Gamma \subset {\rm Isom}(\mathbb{%
R}^{d})$ such that ${\rm Isom}(\mathbb{R}^{d})/\Gamma$ is compact, where  ${\rm Isom}(\mathbb{R}^{d})$ is endowed with the topology of pointwise convergence.

Or equivalently, one can define a \emph{crystal group} to be a discrete subgroup $\Gamma \subset {\rm Isom}(\mathbb{%
R}^{d})$ such that there exists a compact {\em fundamental domain} $P$ for $\Gamma$.
\end{definition}

Intuitively, a crystal should have a bounded pattern that is repeated until it fills
up space, i.e. there exists a bounded closed set $P$ such that
\begin{equation*}
\bigcup_{\gamma\in\Gamma}\gamma(P)=\R^d \mbox{ and } \gamma(P^\circ)\cap\gamma'(P^\circ)\neq\emptyset \mbox{ then } \gamma=\gamma',
\end{equation*}
where $P^\circ$ is the interior of $P$.

This set is called {\em fundamental domain}, which corresponds to the fundamental domain for lattices, only that here its shape can be much more general.

Note that a particular case of crystal group is the group of translations on a lattice.

The theorem of Bieberbach \cite{B1} yields the following:
\begin{theorem}[Bieberbach] \label{B1}
Let $\gamma$ be a crystal subgroup of ${\rm Isom}(\mathbb{R}^{d})$. Then
\begin{enumerate}
\item
$\Lambda=\Gamma \cap {\rm Trans}(\R^d)$  is a finitely generated abelian group of rank $d$
which spans ${\rm Trans}(\R^d)$, and
\item \label{point-def} the linear parts of the symmetries $ad \Gamma \cong \Gamma/\Lambda$, the {\em point group} of $\Gamma$, is finite.
\end{enumerate}
\end{theorem}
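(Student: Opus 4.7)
The plan is to prove part (2) first and then derive part (1). The key tool is the linear-part homomorphism $\mathrm{lin}:\mathrm{Isom}(\R^d)\to O(d)$ induced by the semidirect decomposition $\mathrm{Isom}(\R^d)=O(d)\ltimes\R^d$; its kernel is exactly $\mathrm{Trans}(\R^d)$, so $\Gamma/\Lambda$ embeds canonically into $O(d)$. Part (2) then amounts to showing that the image $\mathrm{lin}(\Gamma)$ is discrete in the compact group $O(d)$, hence finite.

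To establish the discreteness I would rely on Bieberbach's commutator estimate. Writing $\gamma_i(x)=A_ix+b_i$ with $A_i\in O(d)$, a direct computation shows that $\mathrm{lin}([\gamma_1,\gamma_2])=A_1A_2A_1^{-1}A_2^{-1}$ and that $\|A_1A_2A_1^{-1}A_2^{-1}-I\|\le C\,\|A_1-I\|\cdot\|A_2-I\|$, i.e.\ commutators contract linear parts quadratically near the identity. Suppose for contradiction that $\mathrm{lin}(\Gamma)$ accumulates at $I$. Choose $\gamma_0\in\Gamma$ with $\mathrm{lin}(\gamma_0)\neq I$ but very close to $I$, and conjugate it by well-chosen elements of $\Gamma$ whose translational parts can be kept inside a bounded neighborhood of the compact fundamental domain $P$. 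Iterated commutators then produce infinitely many distinct elements of $\Gamma$ whose linear parts converge to $I$ and whose translational parts remain bounded, contradicting the discreteness of $\Gamma$ in $\mathrm{Isom}(\R^d)$. Hence $\mathrm{lin}(\Gamma)\cap U=\{I\}$ for some neighborhood $U$ of $I$, so $\mathrm{lin}(\Gamma)$ is discrete in $O(d)$ and therefore finite, proving (2).

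For part (1), the finiteness of $\Gamma/\Lambda$ combined with the compact fundamental domain $P$ for $\Gamma$ yields a compact fundamental domain $P'$ for $\Lambda$: if $\gamma_1,\dots,\gamma_n\in\Gamma$ represent the cosets of $\Lambda$, then $P'=\bigcup_{j=1}^n\gamma_j(P)$ is compact and its $\Lambda$-translates cover $\R^d$. Thus $\Lambda$ is a discrete subgroup of the abelian group $\mathrm{Trans}(\R^d)\cong\R^d$ acting cocompactly, which forces it to be a full-rank lattice. In particular $\Lambda$ is finitely generated free abelian of rank $d$ and its $\R$-span equals $\mathrm{Trans}(\R^d)$.

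The main obstacle is the commutator iteration inside the discreteness argument: one must simultaneously drive the linear parts to $I$ using the quadratic estimate while guaranteeing that the translational parts do not escape to infinity. The compactness of $P$ is what allows this dual control, but the careful bookkeeping of translational parts as commutators are iterated is the most delicate aspect of Bieberbach's original proof.
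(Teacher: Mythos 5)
First, a point of comparison: the paper does not prove Theorem~\ref{B1} at all --- it is quoted as a classical result with a citation to Bieberbach \cite{B1} --- so your attempt can only be measured against the classical proofs. Your overall architecture is indeed the classical Bieberbach--Frobenius route: the embedding $\Gamma/\Lambda\hookrightarrow O(d)$ via the linear-part homomorphism is correct, the commutator estimate $\|[A,B]-I\|\le 2\,\|A-I\|\,\|B-I\|$ for $A,B\in O(d)$ is correct, and your deduction of (1) from (2) is clean: with $[\Gamma:\Lambda]<\infty$ the set $P'=\bigcup_j\gamma_j(P)$ is a compact set whose $\Lambda$-translates cover $\R^d$, and a discrete cocompact subgroup of $\mathrm{Trans}(\R^d)\cong\R^d$ is a full-rank lattice.

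The genuine gap is the step ``iterated commutators then produce infinitely many distinct elements of $\Gamma$,'' which is not mere bookkeeping and, as stated, fails for two concrete reasons. First, discreteness cuts the other way: a computation gives $\mathrm{trans}([\alpha,\beta])=(I-ABA^{-1})a+A(I-BA^{-1}B^{-1})b$, whence $\|\mathrm{trans}([\alpha,\beta])\|\le\|B-I\|\,\|a\|+\|A-I\|\,\|b\|$, so once the linear parts are close to $I$ the iterated commutators contract to the identity \emph{in both components}; discreteness then forces them to \emph{equal} the identity after finitely many steps. Thus the iteration cannot by itself manufacture an infinite accumulating family --- what it actually proves is that near-identity linear parts commute, and the contradiction must then be extracted from that commutativity by a separate argument, which you do not supply. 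Second, in the commuting configuration the machinery is vacuous: the screw motion $\gamma(x,y,z)=(R_\theta(x,y),\,z+1)$ with $\theta$ an irrational multiple of $2\pi$ generates a \emph{discrete} subgroup of $\mathrm{Isom}(\R^3)$ whose linear parts are dense in a circle subgroup of $SO(3)$ and whose commutators are all trivial; note also that conjugation preserves $\|A-I\|$, so conjugating by ``well-chosen elements'' cannot create the needed noncommutativity either. Hence any correct proof must use cocompactness quantitatively and handle the abelian case, e.g.\ via Buser's minimality argument showing that every $\gamma\in\Gamma$ whose linear part is sufficiently close to $I$ is a pure translation (one subtracts a nearby translation, available because cocompactness makes the translational parts of $\Gamma$ relatively dense, and analyzes the powers $(A,a)^n$, whose translational components stay bounded off the fixed space of $A$ since $I-A$ is invertible there). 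Finally, note that your ordering concentrates all the difficulty into the discreteness of $\mathrm{lin}(\Gamma)$: the classical ordering is the reverse and cheaper --- first prove that $\Lambda$ has rank $d$ (the hard core), after which finiteness of the point group is immediate, since an orthogonal map preserving a full-rank lattice permutes the finitely many lattice vectors of each fixed length and is determined by its action on a lattice basis.
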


Here ${\rm Trans}(\R^d)$ stands for translations of $\R^d$.

We will denote the point group of $\Gamma$ defined in \eqref{point-def} of Theorem~\ref{B1}  by $G$,
and call $(\Gamma, G, \Lambda)$ a crystal triple.
\begin{remark}
\
{\rm
\begin{itemize}
\item Note that the set $\Lambda$ is not empty by Bierberach's theorem \cite{B1}. Moreover, $\Lambda$ consists of translations on a lattice $\mathcal{L}$ which is isomorphic to $\Zeta^d$.

 We will denote by $L$ and $L^*$ the fundamental domains of the lattices $\mathcal{L}$ and its dual, $\mathcal{L}^*$ respectively. We will also make a slight abuse of notation and use $\Lambda$ and $\Lambda^*$ for $\mathcal{L}$ and $\mathcal{L}^*$.
 \item The Point Group $G$ of $\Gamma$ is a finite subgroup of $\textbf{O}(d)$, the orthogonal group of $\mathbb{R}^{d}$, that preserves the lattice of translations, i.e. $G\mathcal{L}=\mathcal{L}$.
\end{itemize}
}
\end{remark}

General results on crystal groups, can be found for example in
\cite{B1}, \cite{gru}, \cite{Z}.

One very important class of crystallographic groups, are the {\em splitting crystallographic groups}:
\begin{definition}\label{split}
$\Gamma$ is called a \emph{splitting crystal group} if it is the
semidirect product of the subgroups $\Lambda $ and $G$. In this case $\Gamma =G\ltimes\Lambda$ and for each
$\gamma, \widetilde{\gamma} \in \Gamma$, with $\gamma=(g_i,\tau_k)$ and $\widetilde{\gamma}=(g_j,\tau_l)$, we have $\widetilde{\gamma}\cdot\gamma=(g_jg_i,\tau_kg^{-1}_i\tau_lg_i)$ where $g_i, g_j\in G$, $\tau_k,\tau_l\in\Lambda$ and $\gamma(x)=g_i(x+k)$.
\end{definition}

Every crystal group is naturally embedded into a splitting group, and very often arguments for general groups can be relatively easy reduced to the splitting case and then be proved for that simpler case.

For simplicty of notation, for each $\gamma\in\Gamma$ we will use the notation $\gamma=(g_i,k)$ in stead of $(g_i,\tau_k)$. If $\gamma=(g_i,k)$ and $\widetilde{\gamma}=(g_j,l)$, then $\widetilde{\gamma}\cdot\gamma=(g_jg_i,k+g_i^{-1}(l))$.

From here on we will only consider splitting crystal groups.

\section{$\Gamma-$refinement function and Crystal Multiresolution Analyses.}

In order to obtain wavelet-type reproducing systems, the group condition on the translations is not essential. However, having an underlying group allows to use powerful mathematical tools. In particular, if one wants to ensure a regular (discrete and uniform) movement through space using a group of isometries, the only possibilities are the crystal groups (see \cite{Z}).

The classic wavelets use the simplest of such groups, when only translations are used. We will here turn our attention to more general crystal groups.

In order to complete the construction of crystal wavelets, we need to add a dilation operator to our crystal group and a special dilation matrix.

\begin{definition}
  Let $(\Gamma,G,\Lambda)$ be a splitting crystal triple. We will say that $a$ is a \emph{$\Gamma-$admissible matrix}, if $a$ is an expanding matrix (i.e. all eigenvalues have modulus larger than $1$) and $a\Gamma a^{-1}\subset\Gamma.$
\end{definition}

We will assume throughout this paper that the $\Gamma-$admissible matrix $a$ admits a choice of digits $D=\{d_0,...,d_{m-1}\}$ such that the unique compact set $\Q$ that satisfies $a(\Q)=\bigcup_{s=0}^{m-1} \Q+d_s,$ is a \emph{tiling} of $\R^d$, i.e. $\{\Q+\lambda\}_{\lambda\in\Lambda}$ cover $\R^d$,
where $\Lambda$ is the group of translations associated to $\Gamma$. Lagarias and Wang proved that such digits exist if $d=1,2,3$ or if $m=|\det(a)|>d$ \cite{LW2}.

We can now introduce the notion of $(\Gamma,a)$-multiresolution analysis (see also \cite{BK}, \cite{alfcar}, \cite{MT}), which is a natural
generalization of the classical Multiresolution Analysis definition.

\begin{definition}\label{def MRA}
Let $\Gamma$ be a crystal group and let $a$ be a matrix such that $a\Gamma a^{-1}\subset\Gamma.$ A sequence $\{\V_{j}\}_{j\in \mathbb{Z}}$ of closed subspaces of $L^{2}(%
\mathbb{R}^{d})$ is an \emph{orthogonal $(\Gamma ,a)$-multiresolution analysis} ($
(\Gamma ,a)-MRA$)\emph{ of multiplicity $n$}, if the following
conditions hold:

\begin{enumerate}
\item[(i)] $\V_{j}\subset \V_{j+1}$, for  $j\in \mathbb{Z}$.

\item[(ii)] $\V_{j+1}=D_{a}^{-1}\V_{j}$, for  $j\in \mathbb{Z}$.

\item[(iii)] $\bigcap_{j\in \mathbb{Z}}\V_{j}=\{0\}$ and $\overline{%
\bigcup_{j\in \mathbb{Z}}\V_{j}}=L^{2}(\mathbb{R}^{d})$.

\item[(iv)] There exists an $n-$tuple $\Phi =(\phi _{1}, \dots, \phi _{n})$ of
functions $\phi _{i}\in L^{2}(\mathbb{R}^{d})$, called the \emph{crystal-scaling
function vector}, such that $\{D_{\gamma }\phi _{i}:\ \gamma \in \Gamma ,\
i=1, \dots, n\}$ is an orthonormal basis of $\V_{0}$.
\end{enumerate}
\end{definition}

\begin{remark}
If $\{\V_j\}_{j\in\Zeta}$ is a $(\Gamma,a)$-MRA, then, since $\V_0\subset\V_1$, the crystal-scaling vector function $\Phi\in \V_1$. As $\V_1=D^{-1}_a\V_0$,
the set $\{D_a^{-1}D_{\gamma }\phi _{i}:\ \gamma \in \Gamma ,\
i=1, \dots, n\},$
is an orthonormal basis of $\V_1$. Then for each $i=1,...,n$
\begin{equation*}
\phi_i(x)=\sum_{\gamma\in\Gamma}\sum_{j=1}^n d_{i,j}^\gamma D^{-1}_aD_\gamma\phi_j(x)=\sum_{\gamma\in\Gamma}\sum_{j=1}^n |\det a|^{-\frac{1}{2}}d_{i,j}^\gamma D_\gamma\phi_j(ax),
\end{equation*}
We consider now, for each $\gamma\in\Gamma$, the matrices $d_\gamma=|\det a|^{-\frac{1}{2}}(d_{i,j}^\gamma)_{i,j=1,...,n}.$
Then we have
\begin{equation}\label{eq}
\Phi(x)=\sum_{\gamma\in\Gamma}d_\gamma\Phi(\gamma^{-1}(ax)),
\end{equation}
where $d_\gamma$ are $n\times n$ matrices.

This equation is a generalization of the classic \emph{refinement equation}. We will therefore use the following definition.
\end{remark}

\begin{definition}\label{def ref}
Let $(\Gamma,G,\Lambda)$ be a splitting crystal triple and $a$ a $\Gamma-$admissible matrix.
Consider the vector-valued function $f:\R^d\rightarrow\C^r,\
f(x)=(f_1(x), \dots, f_r(x))^T$. We will say that
$f$ is \emph{$\Gamma-$refinable}, or that $f$ satisfies a \emph{$\Gamma$-refinement
equation}, if there exist a finite number of $r\times r$ matrices
$d_{\gamma}$ such that
\begin{equation}\label{gref}
  f(x)=\sum_{\gamma \in \Gamma}d_{\gamma}f(\gamma^{-1}(ax)).
\end{equation}
\end{definition}

An interesting question is whether given a sequence $\{d_\gamma\}_{\gamma\in\Gamma}\subset\C$, there exists a unique function $f\in L^2(\R^d)$ such that, $f$ is a solution to the $\Gamma-$refinement equation (\ref{gref}) associated to the sequence $\{d_\gamma\}_{\gamma\in\Gamma}$.

\subsection{$(\Gamma,a)-$symmetry and the $\mathbb{J}_\Gamma$ subspace.}

In this section, we will introduce the notion of $(\Gamma,a)-$symmetry, and define a special closed subspace of $L^2(\R^d)$ that will be necessary to characterize the existence and uniqueness of solutions to equation (\ref{gref}).

\begin{definition}
Let $(\Gamma,G,\Lambda)$ be a splitting crystal triple and $G=\{g_1, g_2, \dots, g_r\}$, and suppose that $g_1=Id$. Let $a$ be a $\Gamma-$admissible matrix and let $\{c_k\}_{k\in\Lambda}$, with $c_k\in \C^{r\times r}$. We will say that the matrices $c_k$ have  \emph{$(\Gamma,a)-$symmetry}, if
\begin{equation*}
c_{i,j}^k=c_{1,\rho_i(j)}^{g^{-1}_{h_i}(k)}\quad \mbox{ for all $\quad i,j = 1,...,r\quad$ and $\quad k\in\Lambda$,}
\end{equation*}
where $h_i$ and $\rho_i$ are permutations of $\{1,..,r\}$ such that $g_{h_i}=ag_ia^{-1}$ for $i=1, ..., r$ and $g_{\rho_i(j)}=g^{-1}_{h_i}\circ g_j$ for each $i,j=1, ..., r.$
\end{definition}

\begin{remark}
  There exist matrices with this property. Just as an example, let $\{d_\gamma\}_{\gamma\in\Gamma}$ be a sequence of complex  numbers. If we look at $\gamma=(g_i,k)$, with $g_i \in G, k\in\Lambda$, for each $k \in \Lambda$ we define the matrix $\widetilde{c}_k$ by
  \begin{equation}\label{matck}
\widetilde{c}_k=(c_{i,j}^{k})_{i,j=1,...,r}=\left(d_{(g^{-1}_{h_i}\circ g_j,g^{-1}_j(k))}\right)_{i,j=1,...,r}.
\end{equation}
It easily follows that these matrices have $(\Gamma,a)-$symmetry.
\end{remark}

%
Let as before $(\Gamma,G,\Lambda)$ be a splitting crystal triple, with $G=\{g_1=Id,...,g_r\}$. We define the following closed subspace of $L^2(\R^d,\C^r)$ associated to $\Gamma$:
\begin{equation}\label{conj_J_tild}
    \widetilde {\mathbb{J}_\Gamma} :=\{F\in L^2(\R^d,\C^r):F=(f,f\circ g_2^{-1},...,f\circ g_r^{-1})\; \}.
\end{equation}
We have the following Proposition.
\begin{prop}\label{Sinv}
  Let $(\Gamma,G,\Lambda)$ be a splitting crystal triple, $G=\{g_1=Id,...,g_r\}$ and let $a$ be a $\Gamma-$admissible matrix. Let $\{c_k\}_{k\in\Lambda}$, with $c_k\in \C^{r\times r}$ be matrices having $(\Gamma,a)-$symmetry. Let $\widetilde{\J}$ be defined as above \eqref{conj_J_tild}, then the operator $S:\widetilde{\J}\rightarrow \widetilde{\J}$ given by $\displaystyle SF(x)=\sum_{k\in\Lambda}c_kF(ax-k)$ is well defined.
\end{prop}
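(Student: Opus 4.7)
The statement has two parts: that $SF$ is a well-defined element of $L^2(\mathbb{R}^d,\mathbb{C}^r)$ whenever $F\in\widetilde{\mathbb{J}_\Gamma}$, and that $SF$ actually lies back in $\widetilde{\mathbb{J}_\Gamma}$. The first part is essentially a summability issue: assuming (as the refinement context requires) that only finitely many $c_k$ are nonzero, or more generally $\ell^1$-summable, the sum $\sum_k c_k F(ax-k)$ converges in $L^2$ by a routine argument using the $\Lambda$-periodicity of $\mathbb{R}^d/\Lambda$ and the fact that dilation by $a$ is bounded on $L^2$. The substantive content, which I will focus on, is the invariance of $\widetilde{\mathbb{J}_\Gamma}$.

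The plan is as follows. Write $F=(f,f\circ g_2^{-1},\dots,f\circ g_r^{-1})$ and compute the $i$-th component
\begin{equation*}
(SF)_i(x)=\sum_{k\in\Lambda}\sum_{j=1}^{r}c_{i,j}^{k}\,f\bigl(g_j^{-1}(ax-k)\bigr).
\end{equation*}
I would then apply the $(\Gamma,a)$-symmetry $c_{i,j}^{k}=c_{1,\rho_i(j)}^{g_{h_i}^{-1}(k)}$ and make the change of summation index $k'=g_{h_i}^{-1}(k)$; this is a bijection on $\Lambda$ because $G\Lambda=\Lambda$. The key algebraic identity to invoke next is $ag_i^{-1}=g_{h_i}^{-1}a$, which is exactly the relation $g_{h_i}=ag_ia^{-1}$ rewritten, and which allows me to rewrite $g_j^{-1}(ax-g_{h_i}(k'))$ as $g_{\rho_i(j)}^{-1}\bigl(ag_i^{-1}(x)-k'\bigr)$ using $g_j=g_{h_i}g_{\rho_i(j)}$. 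Relabeling $l=\rho_i(j)$, which is a permutation of $\{1,\dots,r\}$, yields
\begin{equation*}
(SF)_i(x)=\sum_{k'\in\Lambda}\sum_{l=1}^{r}c_{1,l}^{k'}\,f\bigl(g_l^{-1}(ag_i^{-1}(x)-k')\bigr)=(SF)_1\bigl(g_i^{-1}(x)\bigr),
\end{equation*}
which is exactly the condition for $SF\in\widetilde{\mathbb{J}_\Gamma}$ with scalar component $\widetilde{f}=(SF)_1$.

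The main obstacle is purely bookkeeping: correctly chasing the interaction between the lattice shift $k$, the dilation $a$, and the point-group elements $g_i,g_j,g_{h_i},g_{\rho_i(j)}$. Once one writes down the defining identities $g_{h_i}=ag_ia^{-1}$, $g_{\rho_i(j)}=g_{h_i}^{-1}g_j$, and the symmetry relation, the computation essentially forces itself. The only subtle point is that one must change the summation index \emph{before} using the symmetry so that the factor $g_{h_i}^{-1}(k)$ appearing in the symmetry is absorbed into a fresh summation variable; doing it in the wrong order leaves a tangle of transported lattice points that do not obviously re-index cleanly.
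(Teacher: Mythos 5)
Your proposal is correct and takes essentially the same approach as the paper: the paper's proof runs the identical computation in the reverse direction, starting from $\widetilde{f}_1\circ g_i^{-1}$ and expanding it via the same identities $g_{h_i}=ag_ia^{-1}$, $g_{\rho_i(j)}=g_{h_i}^{-1}\circ g_j$ and the same re-indexing of $\Lambda$ by $g_{h_i}$, invoking the $(\Gamma,a)$-symmetry at the last step rather than the first in order to identify the result with $\widetilde{f}_i$. One minor quibble: your closing remark that the index change must precede the use of the symmetry contradicts the (correct) order you actually used, and in fact either order works because $k\mapsto g_{h_i}^{-1}(k)$ is a bijection of $\Lambda$, as the two directions of the paper's proof and yours together illustrate.
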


\begin{proof}
  Let $F\in \widetilde{\J}$, $F=(f,f\circ g_2^{-1},...,f\circ g_r^{-1})$.  We sill show that $SF\in \widetilde{\J}$. We consider $SF=\widetilde{F}=(\f_1,...,\f_r)$. Then $\widetilde{F}(x)=\sum_{k\in\Lambda}c_kF(ax-k)$. Hence, for each $i=1,...,r$, $\f_i(x)=\sum_{k\in\Lambda}\sum_{j=1}^r c^k_{i,j}f\circ g^{-1}_j(ax-k)$. Note that since $g_1=Id$, $\f_1=\f_1\circ g_1^{-1}$. For  $2\leq i\leq r$, by definition of $\f_1$, we have
\begin{equation}\label{ftilde}
  \f_1\circ g_i^{-1}(x)=\sum_{k\in\Lambda}\sum_{j=1}^r c^k_{1,j}f\circ g^{-1}_j(ag_i^{-1}(x)-k).
\end{equation}
By hypothesis $a$ is a $\Gamma-$admissible matrix. Therefore $G=aGa^{-1}$, and for each $i$ there exists $g_{h_i}\in G$ such that $ag_ia^{-1}=g_{h_i}$ and so $g^{-1}_{h_i}a=ag^{-1}_i$. Furthermore, for each $i$, we have that $g_{h_i}G=G$, since $G$ is a group. Therefore there exist permutations $s_{i}$ and $\rho_i$ of the set $\{1, \cdots , r\}$ such that $g_{s_i(j)}=g_{h_i}\circ g_j$ and $g_{\rho_i(j)}=g^{-1}_{h_i}\circ g_j$, $j=1,\dots,r$. Note that, in fact $\rho_i\circ s_i(j) = s_i\circ \rho_i(j) = j$. Thus by \eqref{ftilde}  we have
\begin{align}\label{c*}
  \f_1\circ g_i^{-1}(x)&=\sum_{k\in\Lambda}\sum_{j=1}^{r}c_{1,j}^kf(g_j^{-1}(g_{h_i}^{-1}(ax)-k))=\sum_{k\in\Lambda}\sum_{j=1}^{r}c_{1,j}^kf(g_j^{-1}\circ g_{h_i}^{-1}(ax-g_{h_i}k))\nonumber\\
  &=\sum_{k\in\Lambda}\sum_{j=1}^{r}c_{1,j}^kf(g_{s_{i}(j)}^{-1}(ax-g_{h_i}k))=\sum_{l\in\Lambda}\sum_{j=1}^{r}c_{1,j}^{g^{-1}_{h_i}(l)}f(g_{s_{i}(j)}^{-1}(ax-l))\nonumber\\
  &=\sum_{l\in\Lambda}\sum_{u=1}^{r}c_{1,\rho_i(u)}^{g^{-1}_{h_i}(l)}f(g_u^{-1}(ax-l)),
\end{align}
where in the last equality we used that $\rho_i(u)=j$.
Now, by the $(\Gamma,a)-$symmetry of the matrices $c_k$, the definition of $\f_i$ and (\ref{c*}) we have
\begin{equation*}
  \f_1\circ g_i^{-1}(x)=\sum_{l\in\Lambda}\sum_{u=1}^{r}c_{1,\rho_i(u)}^{g^{-1}_{h_i}(l)}f(g_u^{-1}(ax-l))
  =\sum_{k\in\Lambda}\sum_{j=1}^r c_{i,j}^k (f\circ g^{-1}_j)(ax-k)=\f_i(x).
\end{equation*}
Therefore $\widetilde{F}\in \widetilde{\J}$.
\end{proof}

\subsubsection{The Joint Spectral Radius.}

The spectral radius of a square matrix $M$ is
\begin{equation*}
\rho(M)=\lim_{l\rightarrow\infty}\|M^l\|^{1/l}=\max\{|\lambda|\;:\; \lambda\mbox{ is an eigenvalue of }\;M\}.
\end{equation*}
For each $1\leq p\leq\infty$, the \emph{$p$-joint spectral radius} ($p$-JSR) of a finite collection of $s\times s$ matrices $\mathcal{M}=\{M_1,...,M_m\}$ is
\begin{equation}\label{radio}
\widehat{\rho}_p(\mathcal{M})=\displaystyle \left\{
                                  \begin{array}{ll}
                                    \displaystyle \lim_{l\rightarrow\infty}\left(\sum_{\Pi\in P_l}\|\Pi\|^p\right)^{1/pl}, & \hbox{$1\leq p<\infty$;} \\
                                   \displaystyle  \lim_{l\rightarrow\infty}\max_{\Pi\in P_l}\|\Pi\|^{1/l}, & \hbox{$p=\infty$,}
                                  \end{array}
                                \right.
\end{equation}
where
\begin{equation*}
P_0=\{Id\}\;\;\mbox{ and }\;\; P_l=\{M_{j_1}...M_{j_l}:1\leq j_i\leq m\}.
\end{equation*}
It is easy to see that the limit in (\ref{radio}) exists and is independent of the choice of norm $\|\cdot\|$ on $\C^{s\times s}$. Note that if $p\geq q$, then $\widehat{\rho}_p(\mathcal{M})\leq\widehat{\rho}_q(\mathcal{M})$.

Note that if there is a norm such that $\displaystyle \left(\sum_{j=1}^m\|M_j\|^p\right)^{1/p}\leq\delta$, then, by the definition of $\widehat{\rho}$, it is clear that $\widehat{\rho}_p(\mathcal{M})\leq\delta$.

\subsection{Existence and uniqueness of solutions of a $\Gamma-$refinement equation.}

Since we are seeking compactly supported solutions of the $\Gamma-$refinement equation, we will only consider refinement equations with finitely many non-zero coefficients. Let $\Lambda' \subseteq \Lambda$ be a finite subset of $\Lambda$.  Analogously as in Chapter 2 of \cite{CM}, it can be shown that  the support of a solution to the $\Gamma$-refinement equation will be related to the compact attractor $K_{\Lambda'}$ of the iterated function system $\{w_k(x)= a^{-1}x + k, k \in \Lambda'\}$. From hereon, we will always assume that the coefficients of the refinement equation are all zero, except for a finite number.

We have the following Theorem.


\begin{theorem}\label{existencia}
  Let $(\Gamma,G,\Lambda)$ be a splitting crystal triple, $G=\{g_1=Id,...,g_r\}$, $a$ a $\Gamma-$admissible matrix and $m=|\det a|$. We consider a finitely supported sequence $\{d_\gamma\}_{\gamma\in\Gamma}\subset\C,$ such that $\sum_{\gamma\in\Gamma}|d_\gamma|^{2}<m$, and let $\widetilde{c}_k$ be the matrices generated from the coefficients $\{d_\gamma\}_{\gamma\in\Gamma}$ using equation \emph{(\ref{matck})}. Let as before, $\Lambda'\subset \Lambda$ be the finite set such that $\widetilde{c_k} \not=0$, for $k \in \Lambda'$, and $K_{\Lambda'}$ be the attractor associated the iterated function system $\{w_k(x)= a^{-1}x + k, k \in \Lambda'\}$. Let $\J$ defined analogously as above, by
 \begin{equation} \label{conj_J}
 \J=\{F\in L^2(\R^d,\C^r):F=(f,f\circ g_2^{-1},...,f\circ g_r^{-1})\; \mbox{ and }\;\mathrm{supp}(F)\subset K_{\Lambda'}\}.
 \end{equation}
Then there exists a unique function $F\in\J$ that is a solution to the refinement equation $F(x)=\sum_{k\in\Lambda'}\widetilde{c}_kF(ax-k)\;\; \mbox{ a.e.}.$
\end{theorem}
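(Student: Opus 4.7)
The plan is to apply Banach's contraction principle to the operator $S : \J \to \J$ defined by $SF(x) = \sum_{k \in \Lambda'} \widetilde{c}_k F(ax - k)$. The first step is to verify that $\J$, regarded as a subset of $L^2(\R^d, \C^r)$, is closed and hence a complete metric space with the $L^2$ norm. The structural identity $F = (f, f \circ g_2^{-1}, \ldots, f \circ g_r^{-1})$ cuts out a closed linear subspace — each $g_i$ is orthogonal, so pre-composition with $g_i^{-1}$ is an $L^2$-isometry and the joint graph condition is preserved under $L^2$-limits — and the support condition $\mathrm{supp}(F) \subseteq K_{\Lambda'}$ is also closed under such limits. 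So $\J$ is complete.

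Next, I would verify $S(\J) \subseteq \J$. The matrices $\widetilde{c}_k$ constructed from $\{d_\gamma\}_{\gamma \in \Gamma}$ via (\ref{matck}) possess $(\Gamma,a)$-symmetry — this is exactly the content of the remark following (\ref{matck}) — so Proposition~\ref{Sinv} yields preservation of the $\widetilde{\J}$-structure. Support preservation follows from the fact that $K_{\Lambda'}$ is the attractor of the associated iterated function system: for each $k \in \Lambda'$, the translate-dilate $F(a\cdot - k)$ of a function supported in $K_{\Lambda'}$ remains supported in $K_{\Lambda'}$ by the self-similarity relation $K_{\Lambda'} = \bigcup_{k\in\Lambda'} w_k(K_{\Lambda'})$.

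The core step is to show strict contraction. Writing $F = (f, f \circ g_2^{-1}, \ldots, f \circ g_r^{-1})$ and using orthogonality of the $g_i$, one has $\|F\|_{L^2(\R^d, \C^r)}^2 = r\|f\|_{L^2(\R^d)}^2$; the analogous identity holds for $SF$ since $SF \in \J$. Unpacking the first component of $SF$ — using the definition of $\widetilde{c}_k$ and the change of summation index $l = g_j^{-1}(k)$ — reduces it to $Tf(x) := \sum_{\gamma \in \Gamma} d_\gamma f(\gamma^{-1}(ax))$, so $\|SF\|^2 = r \|Tf\|^2$ and contraction reduces to $\|Tf\|^2 \le c \|f\|^2$ with $c < 1$. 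Using the isometry of each $\gamma \in \Gamma$ together with the volume-scaling $|\det a| = m$ gives $\int |f(\gamma^{-1}(ax))|^2\,dx = m^{-1}\|f\|^2$ for each $\gamma$; combining this with Cauchy--Schwarz and the hypothesis $\sum_\gamma |d_\gamma|^2 < m$ should yield the estimate $\|Tf\|^2 \le \left(m^{-1} \sum_\gamma |d_\gamma|^2\right)\|f\|^2 < \|f\|^2$. Banach's fixed point theorem then produces the unique $F \in \J$ with $SF = F$.

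The main obstacle will be making this contraction estimate sharp. A naive Cauchy--Schwarz on $|Tf(x)|^2 = |\sum_\gamma d_\gamma f(\gamma^{-1}ax)|^2$ introduces an extraneous factor equal to the number of nonzero coefficients, which is too loose. The clean bound likely requires expanding $\|Tf\|^2$ as a double sum over $(\gamma, \gamma')$ and eliminating the cross-terms, either by passing to the Fourier side — where the symbol $M(\xi) = \sum_k \widetilde{c}_k e^{-2\pi i \xi \cdot k}$ is $\Lambda^*$-periodic and Plancherel combined with the substitution $\xi \mapsto a^{-T}\xi$ converts the problem into a weighted bound on $|M|^2$ — or by grouping the translates according to cosets of $a\Lambda$ in $\Lambda$ so that the surviving diagonal terms aggregate to exactly $m^{-1} \sum_\gamma |d_\gamma|^2$.
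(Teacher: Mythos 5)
Your preparatory steps are fine and in fact coincide with the second half of the paper's argument: $\J$ is a closed subspace of $L^2(\R^d,\C^r)$, the matrices \eqref{matck} have $(\Gamma,a)$-symmetry so Proposition~\ref{Sinv} gives $S(\widetilde{\J})\subset\widetilde{\J}$, and the self-similarity of $K_{\Lambda'}$ gives invariance of the support condition. The fatal problem is your core step. First, $S$ is a \emph{linear} operator with $S0=0$, so if $S$ were a strict contraction on the closed subspace $\J$, Banach's theorem would identify the unique fixed point as $F\equiv 0$; your scheme is structurally incapable of producing the nonzero refinable function the theorem is after. Second, the contraction estimate itself is false. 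Take $d=1$, $G=\{Id\}$ (so $r=1$, $\Gamma=\Lambda=\Zeta$), $a=2$, $m=2$, and $d_0=\tfrac12$, $d_1=1$, $d_2=\tfrac12$. Then $\sum_\gamma|d_\gamma|^2=\tfrac32<2$, yet the hat function $N_2=\chi_{[0,1]}\ast\chi_{[0,1]}$ satisfies $N_2(x)=\tfrac12N_2(2x)+N_2(2x-1)+\tfrac12N_2(2x-2)$ and lies in $\J$; since $0$ and $N_2$ are two distinct fixed points of $S$ in $\J$, no equivalent metric can make $S$ a strict contraction there. The cross-terms you hoped to eliminate genuinely survive: the identity $\|\sum_k c_k F(a\cdot-k)\|^2=m^{-1}\sum_k|c_k|^2\|F\|^2$ holds when the dilated translates $F(a\cdot-k)$ are orthogonal, i.e.\ essentially when $F$ has orthonormal translates --- which is exactly the critical case $\sum_\gamma|d_\gamma|^2=m$ excluded by hypothesis --- and your Fourier-side variant would require $\sup_\omega\bigl\|\sum_k\widetilde{c}_k e^{-2\pi i k\cdot\omega}\bigr\|<\sqrt{m}$, which already fails at $\omega=0$ in the example (the symbol value is $2>\sqrt{2}$).

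The paper avoids all of this: it runs no contraction on $\J$ at all, but invokes Theorem~3.4 of \cite{CM}, where the hypothesis $\sum_\gamma|d_\gamma|^2<m$ is consumed by the cascade-algorithm/joint-spectral-radius machinery (the solution arises as the $L^2$-limit of cascade iterates of a normalized starting function, with $\widehat{F}(0)$ pinned to a $1$-eigenvector of the symbol at $0$; the relevant contraction acts on differences of normalized iterates via the finite transition matrices, not via the crude $\ell^2$ mass of the coefficients on all of $L^2$). Only afterwards does it use your remaining ingredients --- Proposition~\ref{Sinv}, support invariance, closedness of $\J$ --- to locate that solution in $\J$. Note, too, that the B-spline example shows the theorem's ``uniqueness'' must be read with CM's normalization built in (scalar multiples of a solution are again solutions), which is precisely what your Banach framing, promising a literally unique element of $\J$, cannot accommodate. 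To repair the proposal you would have to replace the one-step $L^2$ contraction by the transfer-matrix/JSR argument of \cite{CM}.
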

The proof of this result is a consequence of Theorem 3.4 of \cite{CM}, since the condition on the sequence $d_\gamma$ guarantees the existence of a (unique) solution in $L^2(\R^d,\C^r)$ to the vector-refinement equation $F(x)=\sum_{k\in\Lambda'}\widetilde{c}_kF(ax-k)$. But by Theorem~\ref{Sinv}, $S: \widetilde{\J} \rightarrow \widetilde{\J}$. Further, note that if $F \in \widetilde{\J}$ has compact support, so has $SF \in \widetilde{\J}$. Moreover, if $F$ has support in $K_{\Lambda'}$, $SF$ also (see \cite{CM}). Hence, since $\J$ is a closed subspace of $L^2(\R^d,\C^r)$  the unique solution must lie in $\J$.

We are now ready to prove our main result, relating the $\Gamma-$refinability of a function $f$, to the refinability of a certain vector function $F\in\J$.
\begin{theorem}\label{relacion}
 Let $(\Gamma,G,\Lambda)$ be a splitting crystal triple, $G=\{g_1=Id,...,g_r\}$, $a$ a $\Gamma-$admissible matrix and $m=|\det a|$.  We consider a finitely supported sequence $\{d_\gamma\}_{\gamma\in\Gamma}\subset\C$ and the non-zero matrices matrices $\{\widetilde{c}_k\}_{k\in\Lambda'}\subset \C^{r\times r}$,  generated from the coefficients $\{d_\gamma\}_{\gamma\in\Gamma}$ using equation \emph{(\ref{matck})}.
\begin{enumerate}
  \item If $f:\R^d\rightarrow\C$ has compact support and is $\Gamma-$refinable with coefficients $d_\gamma$, then the function $F=(f,f\circ g_2^{-1},...,f\circ g_r^{-1})$ is $\Lambda'-$refinable with coefficients  $\{\widetilde{c}_k\}_{k\in\Lambda'}$, and the sequence $\{\widetilde{c}_k\}_{k\in\Lambda'}$ has $(\Gamma,a)-$symmetry.
  \item If $\sum_{\gamma\in\Gamma}|d_\gamma|^2<m$ and $F=(f_1,...,f_r)\in L^2(\R^d,\C^r)$ is the solution of the refinement equation associated to the matrices $\{\widetilde{c}_k\}_{k\in\Lambda}$, then $F\in\J$ and the function $f=f_1$ is the solution of the $\Gamma-$refinement equation associated  to the scalars $\{d_\gamma\}_{\gamma\in\Gamma}$, i.e., $f$ is solution of $f(x)=\sum_{\gamma\in\Gamma}d_\gamma f(\gamma^{-1}ax)$  a.e. $x\in\R^d.$
\end{enumerate}
\end{theorem}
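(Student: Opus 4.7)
The plan is to treat both parts as consequences of the structural correspondence between the scalar $\Gamma$-refinement and the vector refinement on $\widetilde{\J}$. The $(\Gamma,a)$-symmetry of the matrices $\widetilde{c}_k$ is essentially bookkeeping that is already observed in the remark following the definition of $(\Gamma,a)$-symmetry, while matching the first coordinate of the vector equation with the $\Gamma$-refinement boils down to a change-of-variables argument exploiting $G\Lambda = \Lambda$ together with the identity $g_{h_1} = a\cdot Id\cdot a^{-1} = Id$.

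For part (1), I would first record that $\{\widetilde{c}_k\}$ has $(\Gamma,a)$-symmetry by unravelling \eqref{matck} directly, as pointed out earlier. To show $F = (f, f\circ g_2^{-1}, \dots, f\circ g_r^{-1})$ is $\Lambda'$-refinable with matrices $\widetilde{c}_k$, I would invoke Proposition~\ref{Sinv}: since the symmetry holds, $SF := \sum_k \widetilde{c}_k F(a\cdot - k)$ lies in $\widetilde{\J}$. Thus both $F$ and $SF$ already have the shape $(h, h\circ g_2^{-1}, \dots, h\circ g_r^{-1})$, and equality reduces to its first coordinate. Expanding the $\Gamma$-refinement $f(x) = \sum_\gamma d_\gamma f(\gamma^{-1}(ax))$ with $\gamma = (g_i, k)$ so that $\gamma^{-1}(ax) = g_i^{-1}(ax) - k$, then performing the substitution $l = g_i k$ (legitimate because $G\Lambda = \Lambda$), I expect to recognize the resulting coefficients as precisely $c_{1,j}^l = d_{(g_j, g_j^{-1}(l))}$, using $g_{h_1} = Id$; the equality of the first component follows.

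For part (2), the hypothesis $\sum_\gamma |d_\gamma|^2 < m$ places us squarely in Theorem~\ref{existencia}, whose conclusion (and the discussion following it) guarantees that the unique $L^2(\R^d,\C^r)$ solution of the vector refinement equation $F = \sum_k \widetilde{c}_k F(a\cdot -k)$ already lies in $\J$. Hence $F$ necessarily has the form $(f_1, f_1\circ g_2^{-1}, \dots, f_1\circ g_r^{-1})$, and reading the first-coordinate identity from part (1) in the reverse direction exhibits $f_1$ as a solution of the $\Gamma$-refinement equation with coefficients $d_\gamma$.

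The main obstacle is not analytic but notational: the $\Gamma$-refinement carries coefficients indexed by the whole of $\Gamma = G\ltimes \Lambda$, while the vector equation compresses this data into $r\times r$ matrices indexed only by $\Lambda$. The decompression mechanism is supplied by $g_{h_1} = Id$, which ensures that the first row of the vector equation faithfully reproduces the scalar $\Gamma$-refinement after the substitution $l = g_i k$; the remaining $r-1$ rows are forced by $(\Gamma,a)$-symmetry through Proposition~\ref{Sinv} rather than having to be checked by hand, which is precisely what makes the argument tractable.
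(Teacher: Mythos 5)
Your proposal is correct, and it follows the paper's strategy in all essentials: the same matrices \eqref{matck}, the same change of variables exploiting $ag_i^{-1}=g_{h_i}^{-1}a$ and $G\Lambda=\Lambda$, and, for part (2), exactly the paper's argument --- Theorem \ref{existencia} places the unique solution in $\J$, so $F=(f_1,f_1\circ g_2^{-1},\dots,f_1\circ g_r^{-1})$, and the first row of the vector equation, after the substitution $l=g_j^{-1}(k)$ and using $g_{h_1}=Id$, is precisely the scalar $\Gamma$-refinement equation with the original coefficients $d_{(g_j,l)}=\widetilde{c}^{\,g_j(l)}_{1,j}$. The one place where you genuinely reorganize the logic is part (1): the paper rederives the refinement identity for \emph{every} component $f\circ g_i^{-1}$ by repeating the computation from the proof of Proposition \ref{Sinv} (``analogously to the proof of\dots''), and only afterwards verifies the $(\Gamma,a)$-symmetry; you instead verify the symmetry first --- which is indeed pure bookkeeping from \eqref{matck}, valid independently of any refinability, as the paper's remark following the definition already observes --- and then invoke Proposition \ref{Sinv} as a black box to get $SF\in\widetilde{\J}$, so that, both $F$ and $SF$ being determined by their first components, the vector identity $F=SF$ collapses to a single first-coordinate check. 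This buys real economy (one row verified by hand instead of $r$) and makes the role of Proposition \ref{Sinv} cleaner than in the paper, where it enters only by analogy. One caveat, more pedantic than substantive: Proposition \ref{Sinv} is stated for $F\in\widetilde{\J}\subset L^2(\R^d,\C^r)$, whereas part (1) assumes of $f$ only compact support; to apply the proposition as a statement you should either add the (clearly intended) hypothesis $f\in L^2(\R^d)$, or note that, the sequence $\{d_\gamma\}$ being finitely supported, the sums in the proof of Proposition \ref{Sinv} are finite and the manipulation is a pointwise identity, which applies verbatim --- the latter being in effect how the paper itself uses it.
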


\begin{proof}
\begin{enumerate}
\item For the first implication, we must find matrices $\widetilde{c}_k\in\C^{r\times r}$
such that $F(x)=\sum_{k\in \Lambda} \widetilde{c}_kF(ax+k),$ and the matrices $\widetilde{c}_k$ have $(\Gamma,a)-$symmetry.

Since $f$ is $\Gamma$-refinable there exist coefficients $d_\gamma$, such that, $f(x)=\sum_{k\in \Lambda}\sum_{j=1}^r d_{(g_j, k)}f(g_j^{-1}(ax)-k).$
Hence, for $g_i\in G$, and for each $i=1, \dots, r$ we have
\begin{equation}
f(g^{-1}_i(x))=\sum_{k\in \Lambda}\sum_{j=1}^r
d_{(g_j, k)}f(g_j^{-1}(ag^{-1}_i(x))-k).
\end{equation}
Analogously to the proof of the Proposition \ref{Sinv}, we obtain that
\begin{equation*}
(f\circ g^{-1}_i)(x)=\sum_{\widetilde{k}\in\Lambda}\sum_{u=1}^r d_{(g^{-1}_{h_i}\circ g_u, g^{-1}_u(\widetilde{k}))}f(g^{-1}_u(ax-\widetilde{k}).
\end{equation*}

Now we consider the matrices
\begin{equation}\label{cks}
\widetilde{c}_k=(c_{i,j}^{k})_{i,j=1,...,r}=\left(d_{(g^{-1}_{h_i}\circ g_j,g^{-1}_j(k))}\right)_{i,j=1,...,r},
\end{equation}
and we denote by $(\widetilde{c}_k)_h$ the $h$-th row of
$\widetilde{c}_k$ then
\begin{equation*}
f(g^{-1}_{h}(x))=\sum_{k\in\Lambda}(\widetilde{c}_k)_h F(ax-k),
\end{equation*}
and therefore $F(x)=\sum_{k\in\Lambda} \widetilde{c}_k F(ax-k).$

To finish the proof of (1) we need to prove that the matrices $\widetilde{c}_k$ have $(\Gamma,a)-$symmetry.
For each $i=1,...,r,$ we choose $h_i\in\{1,...,r\}$ such that $g_{h_i}=ag_ia^{-1}$. If  $g_1=Id$ then $g_{h_1}=Id$. In this way, the elements of the first row of each matrix $\widetilde{c}_k$ are given by $c_{1,s}^k=c_{(g_s,g^{-1}_s(k))}$, for each $s=1,...,r.$
Furthermore, for a fixed $i$, there exists a permutation $\rho_i$ of the set $\{1,...,r\}$, such that for each $j=1,...,r$, $g_{\rho_i(j)}=g_{h_i}^{-1}\circ g_j$. Then the element $c_{i,j}^k$ of the matrix $\widetilde{c}_k$, is given by
\begin{equation*}
c_{i,j}^k=d_{(g_{h_i}^{-1}\circ g_j,g_j^{-1}(k))}=d_{(g_{\rho_i(j)},g_{\rho_i(j)}^{-1}\circ g_{h_i}^{-1}(k))}=c_{1,\rho_i(j)}^{g^{-1}_{h_i}(k)},
\end{equation*}
therefore, the matrices $\widetilde{c}_k$ have $(\Gamma,a)-$symmetry.

\item As $\sum_{\gamma\in\Gamma}|d_\gamma|^2<m$ then by Theorem~\ref{existencia} there exists a unique function $F=(f_1,...,f_r)$ such that $F(x)=\sum_{k\in\Lambda}\widetilde{c}_kF(ax-k),$ and $F\in\J.$ We consider $f=f_1$ and will prove that $f$ verifies
$f(x)=\sum_{\gamma\in\Gamma}d_\gamma f(\gamma^{-1}ax)$ a.e. $x\in\R^d$.
As $F$ is $\Lambda'-$refinable, then
\begin{equation*}
f(x)=\sum_{k\in\Lambda}\sum_{j=1}^r \widetilde{c}^k_{1,i}f\circ g^{-1}_j(ax-k)=\sum_{l\in \Lambda}\sum_{j=1}^r
\widetilde{c}^{g_j(l)}_{1,j}f(g^{-1}_j(ax)-l)).
\end{equation*}
We define $d_{(g_i,l)} :=\widetilde{c}^{g_i(l)}_{1,i}$, and so
\begin{equation*}f(x)=\sum_{k\in\Lambda}\sum_{i=1}^r d_{(g_i,k)}f(g_i^{-1}(ax)-k)=\sum_{\gamma\in\Gamma} d_{\gamma}f(\gamma^{-1}(ax)),
\end{equation*}
and therefore $f$ is $\Gamma-$refinable.
\end{enumerate}
\end{proof}
As a Corollary we obtain the important following Theorem:
\begin{theorem}
Let $(\Gamma,G,\Lambda)$ be a splitting crystal triple, $G=\{g_1=Id,...,g_r\}$, and let $a$ be a $\Gamma-$admissible matrix with $m=|\det a|$.  We consider the finitely supported sequence $\{d_\gamma\}_{\gamma\in\Gamma}\subset\C$ such that $\sum_{\gamma\in\Gamma}|d_\gamma|^2<m$. Then, there exists a unique solution, $f\in L^2(\R^d)$, to the $\Gamma-$refinement equation associated to the sequence $\{d_\gamma\}_{\gamma\in\Gamma}$, i.e. $f$ satisfies $\displaystyle f(x)=\sum_{\gamma\in\Gamma}d_\gamma f(\gamma^{-1}ax).$
\end{theorem}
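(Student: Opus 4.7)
The plan is to deduce this theorem directly from Theorem~\ref{existencia} together with Theorem~\ref{relacion}, by lifting the scalar $\Gamma$-refinement equation to the associated vector $\Lambda'$-refinement equation, where both existence and uniqueness are already in hand.

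For existence, I would first assemble from $\{d_\gamma\}$ the $r\times r$ matrices $\{\widetilde{c}_k\}_{k\in\Lambda'}$ via formula (\ref{matck}); the remark following that formula records that such matrices automatically have $(\Gamma,a)$-symmetry, so Proposition~\ref{Sinv} applies. The hypothesis $\sum_{\gamma}|d_\gamma|^2<m$ then lets me invoke Theorem~\ref{existencia}, which produces a unique $F=(f_1,\dots,f_r)\in\J$ satisfying $F(x)=\sum_{k\in\Lambda'}\widetilde{c}_k F(ax-k)$. Setting $f:=f_1$, Theorem~\ref{relacion}(2) immediately yields that $f\in L^2(\R^d)$ satisfies the $\Gamma$-refinement equation with coefficients $\{d_\gamma\}$, which handles existence (and, as a byproduct, shows that $f$ has compact support contained in $K_{\Lambda'}$).

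For uniqueness in $L^2(\R^d)$, let $\widetilde{f}\in L^2(\R^d)$ be any solution and form $\widetilde{F}:=(\widetilde{f},\widetilde{f}\circ g_2^{-1},\dots,\widetilde{f}\circ g_r^{-1})$. Since every $g_i\in\textbf{O}(d)$ is an isometry of $\R^d$, each coordinate of $\widetilde{F}$ has the same $L^2$-norm as $\widetilde{f}$, so $\widetilde{F}\in L^2(\R^d,\C^r)$. A direct calculation, identical to the one in the proof of Theorem~\ref{relacion}(1) and relying only on the finiteness of the support of $\{d_\gamma\}$ (not on any compact-support hypothesis on $\widetilde{f}$), shows that $\widetilde{F}$ satisfies the vector refinement equation with the same matrices $\{\widetilde{c}_k\}$. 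Because the uniqueness underlying Theorem~\ref{existencia}, namely Theorem~3.4 of \cite{CM}, holds throughout $L^2(\R^d,\C^r)$ and not merely within $\J$, we conclude $\widetilde{F}=F$, and hence $\widetilde{f}=f_1=f$.

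The main point of delicacy is precisely this last step: I must rely on the fact that the uniqueness statement cited from \cite{CM} is uniqueness in the ambient space $L^2(\R^d,\C^r)$, rather than only within the closed subspace $\J$, since an arbitrary $L^2$ solution of the scalar equation lifts a priori only to an arbitrary $L^2$ vector function that need not sit inside $\J$. Once this is granted, the argument is otherwise just bookkeeping translating between the scalar $\Gamma$-refinement picture and the vector $\Lambda'$-refinement picture.
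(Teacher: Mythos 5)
Your proposal is correct and takes essentially the same route as the paper, which offers no separate argument but presents this theorem as an immediate corollary of Theorem~\ref{existencia} and Theorem~\ref{relacion}(2): form the matrices $\widetilde{c}_k$ from $\{d_\gamma\}$ via \eqref{matck}, obtain the unique vector solution $F\in\J$ of $F(x)=\sum_{k\in\Lambda'}\widetilde{c}_kF(ax-k)$, and set $f=f_1$. Your uniqueness step --- lifting an arbitrary $L^2(\R^d)$ solution to $\widetilde{F}=(\widetilde{f},\widetilde{f}\circ g_2^{-1},\dots,\widetilde{f}\circ g_r^{-1})$ by the computation of Theorem~\ref{relacion}(1), which indeed uses only the finite support of the coefficients, and then appealing to uniqueness in the ambient space $L^2(\R^d,\C^r)$ --- is precisely the argument the paper leaves implicit, and the point of delicacy you flag (ambient uniqueness from Theorem~3.4 of \cite{CM}, rather than uniqueness only within $\J$) is exactly the reading the paper itself adopts in its justification of Theorem~\ref{existencia}.
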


This result is important on our  purpose of obtain properties of a $(\Gamma,a)-$Multiresolution Analysis from a classical Multiresolution Analysis.
%
%
%
%
\subsection{$(\Gamma,a)-$MRA}
We are interested in studying and generate $(\Gamma,a)-$MRA associated to a function $\phi\in L^2(\R^d)$, and understand how such a  $(\Gamma,a)-$MRA relates to a classical MRA.

\begin{definition}
Let $\Gamma$ be a crystal group, and assume that $\phi\in
L^2(\R^d,\C^n)$ has orthonormal translates on $\Gamma$. Let
$\mathcal{V}_0$ be the closed linear span of the translates of the
component functions $\phi_i$,
\begin{equation}\label{defv0}
\V_0=\overline{\mathrm{span}}\{D_{\gamma}\phi_i(x)\}_{\gamma\in\Gamma,\
i=1, \dots, n}.
\end{equation}

Then, for each $j\in\Zeta$, define $\V_j$ to be the set of all dilations of functions in $\V_0$ by $a^j$, i.e.,
\begin{equation}\label{defvj}
\V_j=\{f(a^jx):f\in\V_0\}.
\end{equation}

If the subspaces $\{\V_j\}_{j\in\Zeta}$ defined in this way are a
crystal multiresolution analysis for $L^2(\R^d, \C^n)$ (see Definition~\ref{def MRA}) then we
say that it is the $(\Gamma,a)-$MRA \emph{generated} by $\phi$.
\end{definition}

The following result relates a $(\Gamma,a)$-MRA in the case $\phi:\R^d \rightarrow \C$, to a classical MRA and is an easy consequence from Theorem \ref{relacion}.

\begin{cor}\label{lema}
Let $(\Gamma,G,\Lambda)$ be a splitting crystal triple, $r=|G|$, $G=\{g_1, \dots, g_r\}$. Then
$\{\V_j\}_{j\in\Zeta}$ is a $(\Gamma,a)$-MRA of multiplicity 1, with
crystal-scaling function $\phi$ if an only if the function
$\Phi:\R^d\rightarrow\C^r$ defined by
\begin{equation}\label{phi}
\Phi(x)=(\phi\circ g^{-1}_1(x), \dots, \phi\circ g^{-1}_r(x))^T,
\end{equation}
is the scaling vector function for a MRA of multiplicity $r$, and the coefficients of the refinement equation of $\Phi$ have $(\Gamma,a)-$symmetry.
\end{cor}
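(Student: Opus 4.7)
The plan is to show that the defining conditions of a $(\Gamma,a)$-MRA for $\phi$ translate, one by one, into the defining conditions of a classical vector-valued MRA for $\Phi$, with the refinement equation equivalence being supplied by Theorem~\ref{relacion}.

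First, I would establish the dictionary between $\Gamma$-translates of $\phi$ and $\Lambda$-translates of the components of $\Phi$. Since $\Gamma = G \ltimes \Lambda$, every $\gamma\in\Gamma$ has the form $\gamma=(g_i,k)$ with $g_i\in G$ and $k\in\Lambda$, and a direct computation combined with $|\det g_i|=1$ (as $g_i$ is orthogonal) yields $D_\gamma\phi(x)=(\phi\circ g_i^{-1})(x-g_i(k))$. Because $G$ preserves the lattice, $g_i\Lambda=\Lambda$, so as $\gamma$ varies over $\Gamma$ the family $\{D_\gamma\phi\}$ coincides exactly with the $\Lambda$-translates of the components $\phi\circ g_i^{-1}$ of $\Phi$. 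From this identification, orthonormality of $\{D_\gamma\phi\}_{\gamma\in\Gamma}$ in $L^2(\R^d)$ is equivalent to orthonormality of the $\Lambda$-translates of the components of $\Phi$, and the subspace $\V_0$ defined by \eqref{defv0} coincides with the core space $\overline{\mathrm{span}}\{\tau_\ell\Phi_i:\ell\in\Lambda,\ i=1,\dots,r\}$ of the candidate classical MRA.

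Second, since the scale spaces are obtained in both frameworks by dilation of $\V_0$ according to the same rule $\V_j=D_a^{-j}\V_0$, the nesting $\V_j\subset\V_{j+1}$, the density of $\bigcup_j\V_j$ in $L^2$ and the triviality of $\bigcap_j\V_j$, conditions (i)--(iii) of Definition~\ref{def MRA}, have identical content in the crystal and in the classical vector picture. The only genuinely distinct requirement is the refinability needed to secure $\V_0\subset\V_1$, i.e.\ that $\phi$ (respectively $\Phi$) satisfies a refinement equation. Here Theorem~\ref{relacion} does the heavy lifting: part (1) tells us that if $\phi$ is $\Gamma$-refinable with coefficients $\{d_\gamma\}$, then $\Phi$ is $\Lambda$-refinable with coefficients $\{\widetilde{c}_k\}$ having $(\Gamma,a)$-symmetry, while part (2) tells us that, conversely, from a $\Lambda$-refinement equation for $\Phi$ with $(\Gamma,a)$-symmetric coefficients we recover, via $\phi=\Phi_1$, a $\Gamma$-refinement equation for $\phi$. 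Combined with the dictionary of the first paragraph, this yields both implications of the corollary.

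The main (but essentially routine) obstacle I expect is the careful bookkeeping that identifies $\Gamma$, via $(g_i,k)\mapsto(i,g_i(k))$, with pairs (component index, lattice translation) in a bijective manner, so that the set-theoretic matching between $\{D_\gamma\phi\}_{\gamma\in\Gamma}$ and the $\Lambda$-translates of the components of $\Phi$ is genuinely one-to-one and measure-preserving under reparametrisation of the translation variable. This relies on the splitting hypothesis $\Gamma=G\ltimes\Lambda$ and on $g_i\Lambda=\Lambda$. Once this identification is in place, Theorem~\ref{relacion} immediately supplies the equivalence of the two refinement equations together with the $(\Gamma,a)$-symmetry of the coefficients, and both directions of the corollary follow.
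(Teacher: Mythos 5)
Your proposal is correct and takes essentially the same route as the paper, which gives no separate argument for Corollary~\ref{lema} beyond declaring it an easy consequence of Theorem~\ref{relacion}: your dictionary $D_{(g_i,k)}\phi=(\phi\circ g_i^{-1})(\cdot-g_i(k))$, with $g_i\Lambda=\Lambda$ making $(g_i,k)\mapsto(i,g_i(k))$ a bijection, is exactly the bookkeeping that identifies the crystal and vector-valued core spaces, after which Theorem~\ref{relacion} supplies the equivalence of the refinement equations and the $(\Gamma,a)$-symmetry. One small observation: for the converse you do not actually need the hypotheses of part (2) of Theorem~\ref{relacion} (the bound $\sum_{\gamma\in\Gamma}|d_\gamma|^2<m$ and uniqueness), only its algebraic re-indexing of the first row of the matrices $\widetilde{c}_k$, since $\Phi$ is already given in the form \eqref{phi}; your argument uses precisely this and is therefore sound.
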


Using the previous Corollary and Theorem 4.4 in \cite{CM} we have the
following Theorem that characterize those functions $\phi$ that generate an $(\Gamma,a)-$MRA of multiplicity $1$.

\begin{theorem}\label{genera}
Let $(\Gamma,G,\Lambda)$ be a splitting crystal triple, $r=|G|$ and
assume that $\phi\in L^2(\R^d,\C)$ is compactly supported and has
orthonormal translates on $\Gamma$, i.e. for each
$\gamma,\sigma\in\Gamma$, $\langle D_{\gamma}\phi,D_{\sigma}\phi\rangle=\int D_{\gamma}\phi(x)\overline{D_{\sigma}\phi(x)}dx=\delta_{\gamma,\sigma}.$
Let $\V_j\subset L^2(\R^d)$ for $j\in\Zeta$ be defined by
\emph{(\ref{defv0})} and \emph{(\ref{defvj})}. Then the following
statements hold.
\begin{enumerate}
\item[(1)] Properties (ii), (iv) of the Definition \ref{def MRA}, and $\bigcap_{j\in\Zeta}\V_j=\{0\}$ are satisfied.
\item[(2)] Property (i) of the Definition \ref{def MRA}, is satisfied if and only if $\phi$ satisfies a refinement equation of the form
$\phi(x)=\sum_{\gamma\in\Gamma'}d_{\gamma}D_{\gamma}\phi(ax),$
for some scalars $d_{\gamma}$ and some finite set $\Gamma'\subset\Gamma$ (see equation (\ref{eq})).
\item[(3)] If
\begin{equation}\label{Q}
\left|\widehat{\phi}(0)\right|^2=\frac{|L|}{r}.
\end{equation}
where $L$ is the fundamental domain for the lattice $\Lambda$,
then $\bigcup_{j\in\Zeta}\V_j$ is dense in $L^2(\R^d)$. Moreover,
if $\phi$ is refinable, then $\bigcup_{j\in\Zeta}\V_j$ is dense in
$L^2(\R^d)$ if and only if \emph{(\ref{Q})} holds.
\end{enumerate}
\end{theorem}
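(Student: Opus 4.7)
The plan is to apply Corollary \ref{lema}, which reduces every question about the $(\Gamma,a)$-MRA $\{\V_j\}$ of multiplicity $1$ generated by $\phi$ to the corresponding question about the classical multiplicity-$r$ MRA generated by the vector-valued scaling function $\Phi(x)=(\phi\circ g_1^{-1}(x),\dots,\phi\circ g_r^{-1}(x))^T$, and then to invoke Theorem 4.4 of \cite{CM}. Before doing so one must verify that the standard hypotheses on $\Phi$ hold: the compact support of $\Phi$ follows from that of $\phi$ together with the fact that each $g_i$ is an isometry; and the orthonormality of $\{D_\gamma\phi\}_{\gamma\in\Gamma}$ transfers to orthonormality of the $\Lambda$-translates of the components of $\Phi$ once one writes $\gamma=(g_i,k)$ and uses $\Gamma=G\ltimes\Lambda$ to unpack the inner product.

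For item (1), property (ii) is built into the definition \eqref{defvj}, and property (iv) is built into \eqref{defv0} together with the orthonormality just recalled. For $\bigcap_{j\in\Zeta}\V_j=\{0\}$, observe that under the correspondence $\phi\leftrightarrow\Phi$ the spaces $\V_j$ match the classical scale spaces associated to $\Phi$, so the statement reduces to the standard one for compactly supported vector scaling functions with an expanding dilation, which is part of Theorem 4.4 in \cite{CM}.

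For item (2), the nesting $\V_j\subset\V_{j+1}$ for all $j$ is equivalent, via the dilation property (ii), to $\V_0\subset\V_1$, and this in turn is equivalent to the expansion of $\phi$ in the orthonormal basis $\{D_a^{-1}D_\gamma\phi\}$ of $\V_1$, yielding the crystal refinement equation displayed in \eqref{eq}. Theorem \ref{relacion}(1) then guarantees that the induced matrix coefficients $\widetilde{c}_k$ for $\Phi$ possess $(\Gamma,a)$-symmetry, so the hypothesis of Corollary \ref{lema} is fully verified.

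For item (3), density of $\bigcup_j\V_j$ in $L^2(\R^d)$ is equivalent to density of the corresponding classical scale spaces of $\Phi$ in $L^2(\R^d,\C^r)$, which by Theorem 4.4 of \cite{CM} is controlled by a normalization condition on $\widehat{\Phi}(0)$. Since each $g_i$ is an isometry, $\widehat{\phi\circ g_i^{-1}}(\xi)=\widehat{\phi}(g_i\xi)$, so $\widehat{\Phi}_i(0)=\widehat{\phi}(0)$ for every $i$; the relevant quantity becomes $\sum_{i=1}^r|\widehat{\Phi}_i(0)|^2=r|\widehat{\phi}(0)|^2$, and matching this against the classical normalization on the lattice $\Lambda$ (whose fundamental domain has measure $|L|$) yields exactly \eqref{Q}. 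The main obstacle I anticipate is bookkeeping the Fourier and Haar measure conventions well enough to justify the factor $|L|/r$; the rest of the argument is a direct translation between the crystal and classical formulations via Corollary \ref{lema}.
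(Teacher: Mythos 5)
Your proposal is correct and follows essentially the same route as the paper, which proves Theorem \ref{genera} precisely by reducing to the classical multiplicity-$r$ MRA for $\Phi=(\phi\circ g_1^{-1},\dots,\phi\circ g_r^{-1})^T$ via Corollary \ref{lema} and then invoking Theorem 4.4 of \cite{CM}; your identification $\widehat{\Phi}_i(0)=\widehat{\phi}(0)$ (each $g_i$ orthogonal) giving $\sum_{i=1}^r|\widehat{\Phi}_i(0)|^2=r|\widehat{\phi}(0)|^2=|L|$ is exactly the bookkeeping that produces the normalization \eqref{Q}. You in fact supply more detail than the paper does, e.g.\ the unpacking $D_{(g_i,k)}\phi=\tau_{g_ik}(\phi\circ g_i^{-1})$ that matches $\V_j$ with the classical scale spaces, and the appeal to Theorem \ref{relacion}(1) for the $(\Gamma,a)$-symmetry of the induced matrix coefficients.
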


\begin{remark} Note that
$\frac{|L|}{r}$ in equation (\ref{Q}) is the measure of a fundamental
domain for the crystal group $\Gamma$.
\end{remark}

\subsection{Relation between crystal-MRA and classical MRA}

An important application of Theorem \ref{relacion} is that to each
$(\Gamma,a)-$MRA of multiplicity $1$, for $\Gamma$ a splitting crystal group,  one can associate a classical MRA
of multiplicity $r,$ where $r$ is the cardinal of $G$, the Point Group of $\Gamma.$

In \cite{CG} it has been show that to any classical MRA one can associate a wavelet system. The symbol matrix $M_0$ defined as the unique matrix-valued function such that
\begin{equation*}\widehat{\phi}(a^*\omega)=M_0(\omega)\widehat{\phi}(\omega), \mbox{ }\omega\in\R^d,
\end{equation*}
where $\phi$ is the scaling vector function for a MRA, is a central step in their construction.

In what follows we will build the crystal version of the symbol matrix $M_0$. This is the unique function satisfying
\begin{equation*}
\widehat{\Phi}(a^*\omega)=M_0(\omega)\widehat{\Phi}(\omega), \mbox{ } \omega\in\R^d,
\end{equation*}
where $\Phi$ is the function defined in (\ref{phi}).

Let $\Gamma$ be a splitting crystal group,  $r$ the cardinal of $G$ and $\{\V_j\}_{j\in\Zeta}$ an $(\Gamma,a)$-MRA of multiplicity 1,
with associated crystal-scaling function $\phi\in L^2(\R^d)$. Then, by (\ref{eq}) $\phi$
satisfies a $\Gamma-$refinement equation of the form $\phi(x)=\sum_{\gamma\in\Gamma}d_\gamma D_\gamma\phi(ax)$.

For $\gamma=(g_j,k)$, we have $D_\gamma\phi(x)=|\det g_j|^{-\frac{1}{2}}\phi(\gamma^{-1}(x))$. Since $g_j\in \textbf{O}(d)$ then $|\det g_j|=1$, therefore
$D_\gamma\phi(x)=\phi(\gamma^{-1}(x))$. Hence, if $\gamma=(g_j,0)$ then $D_\gamma(\phi(x))=\phi(g_j^{-1}x)$. To simplify notation we will use $D_{g_j}\phi$ for $D_{(g_j,0)}\phi$ for each $j=1,...,r.$

The Fourier transform of $\phi$ is given by
$\widehat{\phi}(\omega)=\sum_{\gamma\in\Gamma}d_\gamma \widehat{D_\gamma\phi(a\cdot)}(\omega)$. Let us calculate $\widehat{D_\gamma\phi(a\cdot)}(\omega)$.

Then
\begin{align}\label{transf2}
\widehat{D_\gamma\phi(a\cdot)}(\omega)&=\int_{\R^d}e^{-2\pi i x.\omega}\phi(g^{-1}_j(ax)-k)dx=\frac{1}{m}\int_{\R^d}e^{-2\pi i (a^{-1}(g_j(y+k))).\omega}\phi(y)dy\nonumber\\
&=\frac{1}{m}e^{-2\pi i k.(g_j^*((a^{-1})^*\omega))}\int_{\R^d}e^{-2\pi i y.(g_j^*((a^{-1})^*\omega))}\phi(y)dy \nonumber\\
&=\frac{1}{m}e^{-2\pi i k.(g_j^*((a^{-1})^*\omega))}\widehat{\phi}(g_j^*((a^{-1})^*\omega)).
\end{align}
Note that as $G\subset\textbf{O}(d)$ then, for each $g\in G$ we have that $g^*=g^{-1}$, where $g^*$ denotes the adjoint operator of $g$ and $m=|\det a|$. Then by (\ref{transf2})
\begin{equation*}
\widehat{D_\gamma\phi(a\cdot)}(\omega)=e^{-2\pi i (g_j(k)).((a^{-1})^*\omega)}D_{g_j}\widehat{\phi}((a^{-1})^*\omega).
\end{equation*}

Therefore
\begin{equation}\label{transf}
\widehat{\phi}(\omega)=\sum_{\gamma\in\Gamma}d_\gamma e^{-2\pi i (g_j(k)).((a^{-1})^*\omega)}D_{g_j}\widehat{\phi}((a^{-1})^*\omega),
\end{equation}
from wich, since $\Gamma$ is a splitting crystal group, we obtain that
\begin{equation*}\widehat{\phi}(\omega)=\frac{1}{m}\sum_{j=1}^r\sum_{h\in\Lambda}d_{(g_j,g^{-1}_j(h))}
e^{-2\pi ih.(a^{-1})^*\omega}
D_{g_j}\widehat{\phi}((a^{-1})^*(\omega)).
\end{equation*}

Let us fix $i$, since
$\widehat{D_{g_i}\phi}(\omega)=D_{g_i}\widehat{\phi}(\omega)$
we have that

\begin{align}\label{1}
D_{g_i}\widehat{\phi}(a^*\w)&=D_{g_i}\left(\frac{1}{m}\sum_{j=1}^r\sum_{h\in\Lambda}d_{(g_j,g^{-1}_j(h))}e^{-2\pi ih.\omega} D_{g_j}\widehat{\phi}(\omega)\right)\nonumber\\
&=
\frac{1}{m}\sum_{j=1}^r\sum_{h\in\Lambda}d_{(g_j,g^{-1}_j(h))}e^{-2\pi
ih.g_i^{-1}(\omega)}D_{g_i\circ g_j}\widehat{\phi}(\omega).
\end{align}
If we denote by $\sigma_i$ the permutation of the set $\{1, \dots, r\}$
such that $g_i\circ g_j=g_{\sigma_i(j)}$, then we have from \eqref{1}
\begin{align} \label{2}
D_{g_i}\widehat{\phi}(a^*\w) &= \sum_{j=1}^r \frac{1}{m}\left(\sum_{h\in\Lambda}d_{(g_j,g^{-1}_j(h))}e^{-2\pi ig_i(h).(\omega)}\right)D_{g_{\sigma_i(j)}}\widehat{\phi}(\omega) \nonumber \\
&= \sum_{j=1}^r \frac{1}{m}\left(\sum_{k\in\Lambda}d_{\left(g_j,g_{\sigma_i(j)}^{-1}(k)\right)}e^{-2\pi ik.(\omega)}\right)D_{g_{\sigma_i(j)}}\widehat{\phi}(\omega)\nonumber \\
&= \sum_{j=1}^r
\frac{1}{m}\left(\sum_{k\in\Lambda}d_{\left(g_{\sigma^{-1}_i(j)},g_j^{-1}(k)\right)}e^{-2\pi
ik.(\omega)}\right)D_{g_j}\widehat{\phi}(\omega).
\end{align}

We define the matrix-valued function
\begin{equation}\label{M}
M_0(\omega)=\frac{1}{m}\sum_{k\in\Lambda}\widetilde{c_0}_{k}e^{-2\pi
ik.\omega}\mbox{ with }\widetilde{c_0}_{k}=\left(d_{(g_{\sigma_i^{-1}(j)},g_j^{-1}(k))}\right)_{i,j=1, \dots, r},\mbox{ }
\widetilde{c_0}_{k}\in\mathbb{C}^{r\times r}.
\end{equation}

Futher, let $\phi$ be as before the vector function, $\Phi:\R^d\rightarrow\mathbb{C}^r$ such that
$\Phi=(\phi_1, \dots, \phi_r)^T$, with $\phi_j=D_{g_j}\phi$ for all
$j=1,..,r$. Note that $D_{g_i}\widehat{\phi}(a^*\omega)$ is equal
to the product of row $i$ of $M_0(\omega)$ with $\Phi(\omega)$, and
hence $\widehat{\Phi}(a^*\omega)=M_0(\omega)\widehat{\Phi}(\omega).$

\section{Existence of crystal wavelets.}

Assume that we have a $(\Gamma,a)-$MRA of multiplicity $1$.
In this section we will give conditions for the existence of an
orthonormal wavelet bases for $L^2(\R^d)$. Further, we provide a construction of these $(\Gamma,a)$-MRA wavelets.

Given a $(\Gamma,a)-$MRA we define, as usual, the subspaces $\W_j$ where $\W_j$ is the
orthogonal complement of $\V_j$ in $\V_{j+1}$, i. e.,
$\mathcal{W}_j\equiv\V_{j+1}\ominus\V_j,\ j\in\Zeta$. We seek a set of
functions in $\V_1,$ $\psi_1, \dots, \psi_l\in\V_1$ such that the system
\begin{equation*}
\{D_{\gamma}\psi_i\ : i=1, \dots, l,\ \gamma\in\Gamma\},
\end{equation*}
is complete and orthonormal in $\W_0$. If such a set of functions
exists, then the $(\Gamma,a)-$MRA structure will guarantee that
the set
\begin{equation}
\{D_a^kD_{\gamma}\psi_i : i=1, \dots, l,\ k\in\Zeta,\ \gamma\in\Gamma\},
\end{equation}
is an orthonormal basis of $L^2(\R^d)$.

Therefore, our task is reduced to finding $\psi_1, \dots, \psi_l\in \W_0$ such that the system $\{D_\gamma\psi_i : i=1,...l\}$ constitute an orthonormal basis for $\W_0.$


\subsection{Characterization of the subspace $\V_1$.}

Our purpose is to find a family of functions in $\W_0$ such that its translates on $\Gamma$ constitute an orthonormal basis for $\W_0$. As $\V_1=\W_0\oplus\V_0$ we are interested in obtaining a characterization of the subspace $\V_1$. For this, given a function in $\V_1$ we will study its Fourier transform.
From now on we assume that $(\Gamma,G,\Lambda)$ is a
splitting crystal triple and we are given a $(\Gamma,a)$-MRA of multiplicity $1$ with crystal-scaling function
$\phi\in L^2(\R^d,\C)$, where $a$ is a dilation matrix that satisfies $a\Gamma
a^{-1}\subset\Gamma$, with $|\det a|=m$. Recall that

If $\psi\in \V_1$, then $\psi(x)=\sum_{\gamma\in\Gamma} c_\gamma \phi(\gamma^{-1}(ax)).$
For each $\gamma=(g,k)\in\Gamma$ the Fourier transform of $D_{\gamma}\phi(a\cdot) $ is given by (\ref{transf2}). Since $ g_i(\Lambda) = \Lambda $, for each $ i = 1, ..., r $ we have that
\begin{equation*}
\widehat{\psi}(\omega)=\sum_{i=1}^r\frac{1}{m}\sum_{h\in\Lambda}c_{(g_i,g^{-1}_i(h))}e^{-2\pi ih.(a^{-1})^*\omega} D_{g_i}\widehat{\phi}((a^{-1})^*(\omega)).
\end{equation*}

For each $i=1, \dots, r$ we call
$(N_\psi)_i(\omega)=\frac{1}{m}\sum_{h\in\Lambda}c_{(g_i,g^{-1}_i(h))}e^{-2\pi
ih.\omega}$. So, for each function $\psi\in \V_1$ we have
\begin{equation*}
\widehat{\psi}(\omega)=\sum_{i=1}^r(N_\psi)_i((a^{-1})^*\omega)D_{g_i}\widehat{\phi}((a^{-1})^*\omega), \mbox{ or }\; \widehat{\psi}(a^*\omega)=\sum_{i=1}^r
(N_\psi)_i(\omega)D_{g_i}\widehat{\phi}(\omega).
\end{equation*}

If $N_{\psi}(\omega)=(N_{1,\psi}(\omega), \dots, N_{r,\psi}(\omega))$, where $N_{i,\psi}=(N_\psi)_i$ for $i=1, \dots, r$,
and $\Phi(\omega)$ is the function defined by (\ref{phi}), then
\begin{equation*}\widehat{\psi}(a^*\omega)=N_{\psi}(\omega)\widehat{\Phi}(\omega)\quad \mbox{\rm and} \quad
\widehat{\psi}(\omega)=N_{\psi}((a^{-1})^*\omega)\widehat{\Phi}((a^{-1})^*\omega). \end{equation*}
Note that $N_\psi\in L^2(L^*,\mathbb{C}^{1\times r})$ with
$N_{\psi}=(N_{1,\psi}, \dots, N_{r,\psi})$ and $N_{i,\psi}$ is a
$\Lambda^*$ periodic function for $i=1, \dots, r$.

It is clear that the converse also holds, that is, if $N$ is a
$\Lambda^*$ periodic function and $N\in
L^2(L^*,\mathbb{C}^{1\times r})$, then the function $f$ defined by $\widehat{f}(\omega)=N((a^{-1})^*\omega)\widehat{\Phi}((a^{-1})^*\omega),$
belongs to $\V_1$. Therefore, we have proved the following characterization.

\begin{prop}\label{prop}
Let $f\in L^2(\R^d,\C)$. Then $f\in \V_1$, if and only if there
exist a $\Lambda^*$ periodic function $N$ and $N\in
L^2(L^*,\mathbb{C}^{1\times r})$ such that $\widehat{f}(a^*\omega)=N(\omega)\widehat{\Phi}(\omega)$.
\end{prop}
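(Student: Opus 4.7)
The plan is to formalize the two computations already sketched immediately before the statement. For the forward implication, the starting point is that the set $\{D_a^{-1}D_\gamma\phi:\gamma\in\Gamma\}$ is an orthonormal basis of $\V_1$, which is a direct consequence of properties (ii) and (iv) of Definition~\ref{def MRA}. Hence any $f\in\V_1$ admits an expansion
\begin{equation*}
f(x)=\sum_{\gamma\in\Gamma}c_\gamma\,\phi(\gamma^{-1}(ax)),\qquad \{c_\gamma\}_{\gamma\in\Gamma}\in\ell^2(\Gamma),
\end{equation*}
where the $L^2$ norm of $f$ controls $\sum|c_\gamma|^2$ via the ONB and the unitarity of the dilation. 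Writing $\gamma=(g_i,k)$ with $g_i\in G$ and $k\in\Lambda$, and applying the Fourier transform identity~(\ref{transf2}) term by term, I obtain
\begin{equation*}
\widehat f(a^*\omega)=\sum_{i=1}^r N_i(\omega)\,D_{g_i}\widehat\phi(\omega),\qquad
N_i(\omega):=\frac{1}{m}\sum_{k\in\Lambda}c_{(g_i,g_i^{-1}(k))}\,e^{-2\pi i k\cdot\omega}.
\end{equation*}
Each $N_i$ is manifestly $\Lambda^*$-periodic, and the sequence $\{c_{(g_i,g_i^{-1}(k))}\}_{k\in\Lambda}$ is square-summable (it is a reindexing, via the bijection $\Gamma\leftrightarrow\{1,\dots,r\}\times\Lambda$, of the $\ell^2$ sequence $\{c_\gamma\}$), so by Parseval on $L^2(L^*)$ the component $N_i\in L^2(L^*,\C)$. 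Setting $N=(N_1,\dots,N_r)\in L^2(L^*,\C^{1\times r})$ yields exactly the identity in the statement.

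For the converse, suppose $N=(N_1,\dots,N_r)\in L^2(L^*,\C^{1\times r})$ is $\Lambda^*$-periodic. Expand each component in its Fourier series $N_i(\omega)=\frac{1}{m}\sum_{k\in\Lambda}a_i^k\,e^{-2\pi i k\cdot\omega}$ with $\{a_i^k\}_{k\in\Lambda}\in\ell^2(\Lambda)$. Define scalars $c_{(g_i,g_i^{-1}(k))}:=a_i^k$ for $i=1,\dots,r$ and $k\in\Lambda$; this prescribes a coefficient $c_\gamma$ for every $\gamma\in\Gamma$, and the total sequence lies in $\ell^2(\Gamma)$. Then
\begin{equation*}
f:=\sum_{\gamma\in\Gamma}c_\gamma\,\phi(\gamma^{-1}(a\,\cdot\,))
\end{equation*}
converges in $L^2(\R^d)$ (up to the fixed normalization factor $m^{1/2}$ coming from the unitary $D_a^{-1}$) to an element of $\V_1$, because the summands are an orthonormal system up to that normalization. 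Running the Fourier transform computation of the forward direction in reverse shows that this $f$ satisfies $\widehat f(a^*\omega)=N(\omega)\widehat\Phi(\omega)$.

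The only genuinely technical point is the careful bookkeeping of the bijection $\gamma=(g_i,k)\leftrightarrow(i,g_i^{-1}(k))$ used to trade the index $k$ for $g_i^{-1}(k)$ in the Fourier coefficient of $N_i$; this is needed so that the exponential $e^{-2\pi i h\cdot\omega}$ appearing after the change of variables in~(\ref{transf2}) is indexed by the same $h\in\Lambda$ in every term of the sum over $i$. Since $g_i\in G\subset\mathbf O(d)$ preserves $\Lambda$, this reindexing is a bijection of $\Lambda$ for every fixed $i$, and it preserves $\ell^2$ norms. All the remaining estimates are consequences of Parseval's theorem applied on $L^2(L^*)$ and of the orthonormality of $\{D_a^{-1}D_\gamma\phi\}_{\gamma\in\Gamma}$ in $\V_1$, so no further obstacle arises.
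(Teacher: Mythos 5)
Your proof is correct and follows essentially the same route as the paper: the paper's argument is precisely the computation carried out just before the statement---expanding $f$ in the orthonormal basis $\{D_a^{-1}D_\gamma\phi\}_{\gamma\in\Gamma}$ of $\V_1$, applying (\ref{transf2}) term by term, and reindexing $h=g_i(k)$ to produce the $\Lambda^*$-periodic symbol $N\in L^2(L^*,\mathbb{C}^{1\times r})$---with the converse obtained by running the same computation backwards. You simply make explicit the $\ell^2$ bookkeeping and the Parseval argument that the paper leaves implicit.
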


\subsection{Conditions for the existence of crystal Wavelets}
\

Let $(\Gamma,G,\Lambda)$  be a splitting crystal triple with $r=|G|$, and let us consider a $(\Gamma,a)-$MRA of multiplicitity $1$. Let $m=|\det a|$.

We are looking for the right conditions for the existence of a wavelet basis.
Since our goal is to find a basis of $\W_0$, as stated before, it is necessary to find $m-1$ functions to
generate this subspace, this condition is analogous to classical wavelets.

If $\psi\in\V_1$ then $
\psi(x)=\sum_{\gamma\in\Gamma}c^{\psi}_\gamma\phi(\gamma^{-1}(ax)).$
Analogously as for the construction of the symbol matrix $M_0$ (Section 2, formula (\ref{M})), we can construct the matrix-valued function
\begin{equation}\label{matrix}
M_\psi(\omega)=\frac{1}{m}\sum_{k\in\Lambda}\widetilde{c}_{\psi,k}e^{-2\pi
ik.\omega},
\end{equation}
such that
\begin{equation}\label{rel}
\widehat{\Psi}(a^*\omega)=M_{\psi}(\omega)\widehat{\Phi}(\omega),
\end{equation}
where $\Psi=(\psi_1, \dots, \psi_r)$ with $\psi_i=D_{g_i}\psi$ for all $i=1, \dots, r,$ and
\begin{equation}\label{c}
\widetilde{c}_{\psi,k}=\left(c^{\psi}_{(g_{\sigma_i^{-1}(j)},g_j^{-1}(k))}\right)_{i,j=1, \dots, r}.
\end{equation}

Then to each function $\psi\in\V_1$ we associate a matrix $M_\psi$ of the form (\ref{matrix}) such that $\psi$ and $M_\psi$ satisfy an equality of type (\ref{rel}).

If we now have $m-1$ functions $\psi^1, \dots, \psi^{m-1}$ in $\V_1$ then
\begin{equation*}
\psi^l(x)=\sum_{\gamma\in\Gamma}c^l_{\gamma}\phi(\gamma(a^{-1}x))\mbox{ for each }l=1, \dots, m-1,
\end{equation*}
and hence for each $l=1, \dots, m-1,$ we have a matrix-valued function
\begin{equation*}
M_l(\omega)=\frac{1}{m}\sum_{k\in\Lambda}\widetilde{c}_{l,k}e^{-2\pi
ik.\omega},
\end{equation*}
where $\widetilde{c}_{l,k}$ are the matrices defined as
in (\ref{c}) with the coefficients $c^l_{(g_i,k)}$, for all
$l=1, \dots, m-1$, respectively.

Futhermore, if we consider the vector functions $\Psi_l:\R^d\rightarrow\mathbb{C}^r$ such
that $\Psi_l=(\psi^l_1, \dots, \psi^l_r)^T$, with $\psi^l_j=D_{g_j}\psi^l$ for
all $j=1,..,r$ then
\begin{equation*}
\widehat{\Psi}_l(a^*\omega)=M_l(\omega)\widehat{\Phi}(\omega), \mbox{ }l=1, \dots, m-1.
\end{equation*}

Note that $M_l$ can be decomposed into the different cosets of $\Lambda$, that is
\begin{equation*}
M_l=M_{l0}+\cdots+M_{l(m-1)}\mbox{ with } M_{lh}(\omega)=\frac{1}{m}\sum_{k\in\Lambda_h}\widetilde{c}_{l,k}e^{-2\pi ik.\omega}.
\end{equation*}
If we define
\begin{equation*}
  u_{lh}(\omega)=\frac{1}{\sqrt{m}}\sum_{k\in\Lambda}\widetilde{c}_{l,ak+d_h}e^{-2\pi ik.\omega}\mbox{ with } h=0,...,m-1,
\end{equation*}
then we obtain
\begin{equation*}
M_{lh}(\omega)=\frac{e^{-2\pi id_h.\omega}}{\sqrt{\omega}}u_{lh}(a^*\omega)\mbox{ and }M_l(\omega)=\sum_{h=0}^{m-1}\frac{e^{-2\pi id_h.\omega}}{\sqrt{\omega}}u_{lh}(a^*\omega).
\end{equation*}

It is now apparent that if we want to prove the existence of a multiwavelet vector function $\psi_l$, we need to find conditions on the matrices $M_1, \dots, M_{m-1}$ such that the translates on $\Gamma$ of $\{\psi^l\; : l=1, \dots, m-1\}$ will form an orthonormal basis for $\mathcal{W}_0$.

Recalling $M_0$ from (\ref{M}) we define the matrix valued function $\mathcal{M}(\omega)\in(\C^{r\times r})^{m\times m}$ by
\begin{equation*}
\mathcal{M}(\w)=\left[M_i(\w+(a^*)^{-1}\varrho_j)\right]_{i,j=0, \dots, m-1},
\end{equation*}
where $\{\varrho_j\}_{j=0,...,m-1}$ is a complete set of representatives of $\widetilde{\Lambda}/a^*\widetilde{\Lambda}$, where $\widetilde{\Lambda}$ is the dual lattice of $\Lambda$. Note that $\mathcal{M}$ \emph{is unitary a.e.} if and only if for each $i,j=0, \dots, m-1$, we have
\begin{equation*}
\sum_{n=0}^{m-1}M_i(\w+(a^*)^{-1}\varrho_n)M^*_j(\w+(a^*)^{-1}\varrho_n)=\delta_{i,j}Id_{r\times r}.
\end{equation*}

Similarly we define the matrix
\begin{equation*}
\mathcal{U}(\phi,\psi_1,...,\psi_m-1)(\omega)=[u_{jh}(\omega)]_{j,h=0,...,m-1}.
\end{equation*}

\begin{definition}
  Let $(\Gamma,G,\Lambda)$ be a splitting crystal triple and $G=\{g_1,...,g_r\}$. Let $\{V_j\}_{j\in\Zeta}$ be a $(\Gamma,a)-$MRA of multiplicity $1$ with scaling function $\phi$, and consider $\{\widetilde{V}_j\}_{\j\in\Zeta}$ a MRA of multiplicity $r$ with vector scaling function $\Phi=(\phi\circ g_1^{-1},...,\phi\circ g_r^{-1})$. We define the set $\mathbb{V}_1\subset\widetilde{V}_1$ such that $F\in\mathbb{V}_1$ if $F\in \widetilde{V}_1$ and the coefficients $c_k$ of $F(x)=\sum_{k\in\Lambda}c_k\Phi(ax-k)$ have $(\Gamma,a)-$symmetry.
\end{definition}

\begin{prop}
  Let $(\Gamma,G,\Lambda)$ be a splitting crystal triple and $G=\{g_1,...,g_r\}$. Let $\{V_j\}_{j\in\Zeta}$ be a $(\Gamma,a)-$MRA of multiplicity $1$ with scaling function $\phi$. Then $F=(f_1,...f_r)\in\mathbb{V}_1$ if and only if $f_i=f_1\circ g_i^{-1}.$
\end{prop}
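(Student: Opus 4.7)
The plan is to exploit that $\widetilde V_1$ carries the orthonormal basis $\{D_a^{-1}D_{\tau_k}\Phi_i : k\in\Lambda,\ i=1,\dots,r\}$ coming from the classical $r$-multiplicity MRA. This makes the coefficient sequence $\{c_k\}$ in $F(x)=\sum_k c_k\Phi(ax-k)$ \emph{unique}. So both implications reduce to comparing two expansions of the components of $F$ in this basis.

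For the ``only if'' direction, I would simply invoke Proposition~\ref{Sinv}. Note that $\Phi=(\phi\circ g_1^{-1},\dots,\phi\circ g_r^{-1})\in\widetilde{\J}$, and that the operator $SF(x)=\sum_k c_k F(ax-k)$ on $\widetilde{\J}$ is exactly the refinement operator producing $F$. Since $\{c_k\}$ has $(\Gamma,a)$-symmetry, Proposition~\ref{Sinv} shows $F=S\Phi\in\widetilde{\J}$, i.e.\ $f_i=f_1\circ g_i^{-1}$ for all $i$.

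For the ``if'' direction, suppose $F=(f_1,f_1\circ g_2^{-1},\dots,f_1\circ g_r^{-1})\in\widetilde V_1$, and let $\{c_k\}_{k\in\Lambda}$ be the (unique) coefficients with $F(x)=\sum_k c_k\Phi(ax-k)$. Writing the first coordinate explicitly,
\begin{equation*}
f_1(x)=\sum_{k\in\Lambda}\sum_{i=1}^{r}c_{1,i}^{k}\,\phi(g_i^{-1}(ax-k)).
\end{equation*}
I would then compute $f_j(x)=f_1(g_j^{-1}x)$ by substitution and apply the identities already used in Proposition~\ref{Sinv}: the admissibility of $a$ gives $ag_j^{-1}=g_{h_j}^{-1}a$, and the permutations $s_j,\rho_j$ determined by $g_{s_j(i)}=g_{h_j}g_i$, $g_{\rho_j(u)}=g_{h_j}^{-1}g_u$ produce
\begin{equation*}
f_j(x)=\sum_{l\in\Lambda}\sum_{u=1}^{r}c_{1,\rho_j(u)}^{g_{h_j}^{-1}(l)}\,\phi(g_u^{-1}(ax-l)).
\end{equation*}

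On the other hand, from $F=\sum_k c_k\Phi(ax-k)$ the $j$-th component also reads $f_j(x)=\sum_{l,u}c_{j,u}^{l}\phi(g_u^{-1}(ax-l))$. Uniqueness of the coefficients in the orthonormal basis $\{D_a^{-1}D_{\tau_l}\Phi_u\}$ then forces
\begin{equation*}
c_{j,u}^{l}=c_{1,\rho_j(u)}^{g_{h_j}^{-1}(l)}\quad\text{for all }j,u\in\{1,\dots,r\},\ l\in\Lambda,
\end{equation*}
which is exactly the $(\Gamma,a)$-symmetry; hence $F\in\mathbb V_1$. The only real subtlety is bookkeeping the permutations $h_j,s_j,\rho_j$ and keeping the change of variables $l=g_{h_j}k$ straight, but this is precisely the computation already carried out in the proof of Proposition~\ref{Sinv}, so no new ingredient is needed.
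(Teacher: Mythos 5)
Your proposal is correct. There is nothing in the paper to compare it against: the authors state this proposition without proof, and what you supply is precisely the argument their machinery makes implicit. The ``only if'' direction is, as you say, Proposition~\ref{Sinv} applied to $\Phi=(\phi,\phi\circ g_2^{-1},\dots,\phi\circ g_r^{-1})\in\widetilde{\J}$ (using $g_1=Id$), so $F=S\Phi\in\widetilde{\J}$; the ``if'' direction reverses the same change of variables $ag_j^{-1}=g_{h_j}^{-1}a$, $l=g_{h_j}k$, $u=s_j(i)$ and then invokes uniqueness of coefficients relative to the orthonormal basis $\{m^{1/2}\phi_u(a\cdot-k)\}_{u=1,\dots,r,\ k\in\Lambda}$ of $\widetilde{V}_1$ (orthonormality of this system is exactly property (iv) of the $(\Gamma,a)$-MRA rewritten over the lattice, since $g_u\Lambda=\Lambda$), which forces $c^{l}_{j,u}=c^{g_{h_j}^{-1}(l)}_{1,\rho_j(u)}$, i.e.\ the $(\Gamma,a)$-symmetry. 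Two small points deserve a sentence in a final write-up. First, Proposition~\ref{Sinv} is proved in the paper in the refinement-equation setting, where $\{c_k\}$ is finitely supported, whereas here $\{c_k\}$ is merely square-summable; the computation still goes through because composition with $g_j^{-1}$ is a unitary operator on $L^2(\R^d)$, orthonormal expansions converge unconditionally (so the rearrangements are legitimate), and $(i,k)\mapsto(s_j(i),g_{h_j}k)$ is a bijection of $\{1,\dots,r\}\times\Lambda$ since $G\Lambda=\Lambda$. Second, the ``if'' direction tacitly assumes $F\in\widetilde{V}_1$, as your argument does; that hypothesis is implicit in the proposition's statement of $\mathbb{V}_1\subset\widetilde{V}_1$, so no circularity arises, but it should be stated.
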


With this notation we have the following proposition:
\begin{prop}
  Let $(\Gamma,G,\Lambda)$ be a splitting crystal triple, $|G|=r$, and let $f_0, f_1, \dots, f_{m-1}$ be functions in $\mathcal{V}_1$. If we write
$
 S_l=\{D_\gamma f_j(x):j=0,...,l-1;\gamma\in\Gamma\} l=1, \dots, m,$
and for each  $j=0,...,m-1$ we consider $F_j=(f_j,f_j\circ g_2^{-1},\cdot,f_j\circ g_r^{-1})$, then we have,
\begin{enumerate}
\item The system $S_1$ is orthonormal if and only if $F_0\in\mathbb{V}_1$ and the functions $u_{0h}$ associated to $F_0$ satisfy $\sum_{h=0}^{m-1}u_{0h}u^*_{0h}=I_r$ a.e.

\item For each $l\leq m,$ the system $S_l$ is orthonormal if and only if $F_j\in\mathbb{V}_1$ for $j=0,...,l$ and $\sum_{h=0}^{m-1}u_{ih}u^*_{jh}=\delta_{ij}I_r$

\item The system $S_m$ is an orthonormal basis for $\V_1$ if and only if $F_j\in\mathbb{V}_1$ for $j=0,...,m-1$, and the block matrix $\mathcal{U}(F_0,...,F_{m-1})$ is unitary a.e.
\end{enumerate}
\end{prop}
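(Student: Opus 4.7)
The plan is to reduce every statement to the vector-valued classical MRA picture supplied by Corollary~\ref{lema}, then apply the standard Fourier bracket characterization of orthonormal $\Lambda$-translates, and finally pass to the polyphase symbols $u_{lh}$ by a modulation-matrix computation familiar from the multiwavelet literature.

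First I would translate $\Gamma$-translates into $\Lambda$-translates of vector functions. Since $\Gamma = G \ltimes \Lambda$ and $G\Lambda = \Lambda$, a direct computation shows that $D_{(g_i,k)} f = \tau_{g_i(k)}(f \circ g_i^{-1})$; as $\gamma$ runs over $\Gamma$, these reproduce exactly the $\Lambda$-translates of the $r$ components of $F = (f\circ g_1^{-1}, \dots, f\circ g_r^{-1})$. Hence orthonormality of $S_l$ is equivalent to orthonormality of the vector-valued system $\{\tau_k F_j : k\in\Lambda,\ j=0,\dots,l-1\}$. The hypothesis $F_j \in \mathbb{V}_1$ is then the exact condition ensuring that $F_j$ is a refinable vector with respect to $\Phi$ whose symbol $M_j$ enjoys $(\Gamma,a)$-symmetry, and in particular gives the relation $\widehat{F_j}(a^*\omega) = M_j(\omega)\widehat{\Phi}(\omega)$.

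Next I would apply the standard bracket criterion: the family $\{\tau_k F_j\}_{k,j}$ is orthonormal iff $\sum_{\kappa\in\Lambda^*}\widehat{F_i}(\omega+\kappa)\widehat{F_j}(\omega+\kappa)^{*} = \delta_{ij} I_r$ a.e. Replacing $\omega$ by $a^*\omega$, splitting $\Lambda^*$ into the $m$ cosets modulo $a^*\Lambda^*$ indexed by the $\varrho_n$'s, and using that $\Phi$ already has orthonormal $\Lambda$-translates (so the resulting inner $\Phi$-bracket collapses to $I_r$), the identity reduces to
\[
\sum_{n=0}^{m-1} M_i\bigl(\omega+(a^*)^{-1}\varrho_n\bigr)\, M_j\bigl(\omega+(a^*)^{-1}\varrho_n\bigr)^{*} \;=\; \delta_{ij}\, I_r,
\]
which is exactly the unitarity of $\mathcal{M}$. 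Substituting the polyphase decomposition $M_l(\omega) = \frac{1}{\sqrt m}\sum_h e^{-2\pi i d_h\cdot\omega}\, u_{lh}(a^*\omega)$ and using the unitarity (up to a scalar) of the character matrix $\bigl[e^{-2\pi i d_h\cdot(a^*)^{-1}\varrho_n}\bigr]_{h,n}$ — the orthogonality of characters dual to the digit/coset duality — converts the displayed identity to $\sum_h u_{ih}(\omega)\, u_{jh}(\omega)^{*} = \delta_{ij}\, I_r$, i.e., the unitarity of $\mathcal{U}$. Specializing to $i=j=0$ with $l=1$ gives (1); letting $i,j$ range produces (2).

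For (3), I would additionally show that orthonormality together with unitarity of $\mathcal{U}$ forces completeness in $\V_1$. Here I would invoke Proposition~\ref{prop}: every $f\in\V_1$ is encoded by a periodic row symbol $N\in L^2(L^*,\C^{1\times r})$ via $\widehat f(a^*\omega) = N(\omega)\widehat\Phi(\omega)$, and $N$ itself admits a polyphase decomposition into $r$ components. Invertibility of $\mathcal{U}$ lets one express any such $N$ uniquely as an $L^2(L^*)$-linear combination of the row symbols attached to the $F_j$'s, which on the function side says exactly that $f$ lies in the closed span of $S_m$. The step I expect to be the principal obstacle is the polyphase reduction of step three: the bookkeeping among the digits $\{d_h\}$, the coset representatives $\{\varrho_n\}$, and the $G$-permutations encoded in the $(\Gamma,a)$-symmetry has to be kept consistent. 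The dimension count in (3) must then be checked carefully, since $m$ crystal wavelets are required to produce $mr$ orthonormal vector translates in $\widetilde V_1$, matching the multiplicity of the enlarged classical MRA space over $\widetilde V_0$.
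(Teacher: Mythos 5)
Your argument is correct and is precisely the route the paper takes: the paper states this proposition without a written proof, relying (as it does explicitly in the proof of Theorem~\ref{teorema}) on the identification $D_{(g_i,k)}f=\tau_{g_i(k)}\left(f\circ g_i^{-1}\right)$, on the passage to the multiplicity-$r$ lattice MRA via Corollary~\ref{lema}, and on the polyphase characterizations of Theorem 4.11 in \cite{CM}, whose bracket, coset-splitting and character-orthogonality content your proposal simply writes out in detail (including the correct count that $m$ crystal generators yield $mr$ lattice generators for $\widetilde{V}_1$). The only caveat is terminological: ``orthonormality of the vector-valued system'' must be read as the full Gramian identity $\sum_{\kappa\in\Lambda^*}\widehat{F_i}(\omega+\kappa)\widehat{F_j}(\omega+\kappa)^{*}=\delta_{ij}I_r$, i.e.\ orthonormality of all scalar component translates (which is what $S_l$ requires), not the weaker orthonormality in $L^2(\R^d,\C^r)$ --- and this is indeed the condition your computation uses.
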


In the following Theorem we will characterize the existence of crystal multiwavelets in terms of the matrix $\mathcal{M}$.

\begin{theorem}\label{teorema}
Let $\Gamma$ be a splitting crystal group, and let $\{\V_j\}_{j\in\Zeta}$ be a $(\Gamma,a)-$MRA for $L^2(\R^d)$ of multiplicity 1. Then, the following statements are equivalent.
\begin{enumerate}
\item[(a)] $\{D_{\gamma}\psi^l\}_{\gamma\in\Gamma,\ l=1, \dots, m-1}$ forms an orthonormal basis for $\mathcal{W}_0.$
\item[(b)] $\mathcal{U}$ is unitary a.e., and the functions $\Psi_l$ associated a $\mathcal{U}$ belongs to $\mathbb{V}_1$.
\item[(c)]$\mathcal{M}$ is unitary a.e., and the functions $\Psi_l$ associated a $\mathcal{M}$ belongs to $\mathbb{V}_1$.
\item[(d)] $\frac{1}{m}\sum_{k\in\Lambda}\widetilde{c}_{i,k}\widetilde{c}^*_{j,k-Av}=\delta_{0,v}\delta_{i,j}I_{r\times r}$ for $v\in\Lambda$ and $i,j=0, \dots, m-1,$ and the matrices $\widetilde{c}_{i,k}$ have a $(\Gamma,a)-$symmetry.
\end{enumerate}
\end{theorem}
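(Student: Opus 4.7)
\textbf{Proof plan for Theorem~\ref{teorema}.} The plan is to establish the chain of equivalences (a)$\iff$(b)$\iff$(c)$\iff$(d), with the preceding Proposition playing the key role in (a)$\iff$(b), an explicit factorization giving (b)$\iff$(c), and Fourier-coefficient uniqueness giving (c)$\iff$(d). Throughout I take $\phi$ as the $(\Gamma,a)$-scaling function of multiplicity one and the candidate wavelets $\psi^{1},\dots,\psi^{m-1}\in\V_{1}$.

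For (a)$\iff$(b), I would reduce to the previous Proposition by setting $f_{0}=\phi$ and $f_{l}=\psi^{l}$ for $l=1,\dots,m-1$, so that $F_{0}=\Phi$ automatically lies in $\mathbb{V}_{1}$ (the scaling vector of the associated classical MRA). Since $\{D_{\gamma}\phi\}_{\gamma\in\Gamma}$ is an orthonormal basis of $\V_{0}$ by hypothesis, and since $\W_{0}=\V_{1}\ominus\V_{0}$, the family $\{D_{\gamma}\psi^{l}\}$ is an ONB of $\W_{0}$ if and only if $S_{m}=\{D_{\gamma}\phi\}\cup\{D_{\gamma}\psi^{l}\}$ is an ONB of $\V_{1}$. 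Item (3) of the preceding Proposition then immediately gives the equivalence with the unitarity of $\mathcal{U}$ together with $F_{l}\in\mathbb{V}_{1}$ for each $l$.

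For (b)$\iff$(c), I would use the decomposition $M_{l}(\omega)=\sum_{h=0}^{m-1}\frac{1}{\sqrt{m}}e^{-2\pi i d_{h}\cdot\omega}u_{lh}(a^{*}\omega)$ derived just before Theorem~\ref{teorema}. Noting that each $u_{lh}$ is $\Lambda^{*}$-periodic and that the representatives $\varrho_{j}$ lie in $\Lambda^{*}$, so $u_{lh}(a^{*}\omega+\varrho_{j})=u_{lh}(a^{*}\omega)$, evaluating at $\omega+(a^{*})^{-1}\varrho_{j}$ yields
\begin{equation*}
M_{i}\bigl(\omega+(a^{*})^{-1}\varrho_{j}\bigr)=\sum_{h=0}^{m-1}u_{ih}(a^{*}\omega)\,\frac{1}{\sqrt{m}}e^{-2\pi i d_{h}\cdot\omega}\,e^{-2\pi i d_{h}\cdot (a^{*})^{-1}\varrho_{j}}.
\end{equation*}
This is exactly the block-matrix identity $\mathcal{M}(\omega)=\mathcal{U}(a^{*}\omega)\cdot\bigl(E(\omega)\otimes I_{r}\bigr)$, where $E_{hj}(\omega)=m^{-1/2}e^{-2\pi i d_{h}\cdot\omega}e^{-2\pi i d_{h}\cdot(a^{*})^{-1}\varrho_{j}}$. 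Since $\{d_{h}\}$ and $\{(a^{*})^{-1}\varrho_{j}\}$ are dual coset systems for $\Lambda$ and $\Lambda^{*}/a^{*}\Lambda^{*}$, the scalar matrix $E(\omega)$ is a generalized DFT and is unitary at every $\omega$; therefore $\mathcal{M}(\omega)$ is unitary a.e.\ iff $\mathcal{U}(a^{*}\omega)$ is unitary a.e., i.e.\ iff $\mathcal{U}$ is unitary a.e. Since the $\mathbb{V}_{1}$ condition is intrinsic to the $\Psi_{l}$ and does not depend on which of $\mathcal{U}$ or $\mathcal{M}$ one uses, the full equivalence (b)$\iff$(c) follows.

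For (c)$\iff$(d), I would expand the unitarity condition $\sum_{n=0}^{m-1}M_{i}(\omega+(a^{*})^{-1}\varrho_{n})M_{j}^{*}(\omega+(a^{*})^{-1}\varrho_{n})=\delta_{i,j}I_{r}$ using the Fourier series $M_{l}(\omega)=m^{-1}\sum_{k\in\Lambda}\widetilde{c}_{l,k}e^{-2\pi i k\cdot\omega}$. The sum over $n$ reduces, by the standard coset identity $\sum_{n}e^{-2\pi i k\cdot(a^{*})^{-1}\varrho_{n}}=m\,\mathbf{1}_{k\in a\Lambda}$, to a series indexed by $v\in\Lambda$ with coefficients $\frac{1}{m}\sum_{k}\widetilde{c}_{i,k}\widetilde{c}_{j,k-av}^{*}$; uniqueness of Fourier coefficients on the torus $\R^{d}/\Lambda^{*}$ then converts the a.e.\ identity into the pointwise matrix identity of (d). The condition that the $\widetilde{c}_{i,k}$ have $(\Gamma,a)$-symmetry is exactly the translation of the requirement $\Psi_{l}\in\mathbb{V}_{1}$ via Proposition~\ref{Sinv} and Theorem~\ref{relacion}.

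The main obstacle is the factorization step in (b)$\iff$(c): one must carefully verify that the DFT matrix $E(\omega)$ built from the digit set $\{d_{h}\}$ and dual representatives $\{\varrho_{j}\}$ is genuinely unitary, which rests on the tiling property of $\Q$ (ensuring $\{d_{h}\}$ is a complete set of coset representatives of $\Lambda/a\Lambda$) and on the compatibility with the dual coset decomposition. Once this is in hand, the remaining equivalences are routine applications of the preceding Proposition and of Fourier series uniqueness.
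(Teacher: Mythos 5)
Your proposal is correct, but it takes a genuinely different route from the paper's own proof, which is essentially a two-line reduction: since $\Gamma$ splits, the crystal system $\{D_{\gamma}\psi^l\}_{\gamma\in\Gamma}$ is rewritten as the lattice system $\{\psi_i^l(x-k)\}_{k\in\Lambda}$ with $\psi_i^l=D_{g_i}\psi^l$, Corollary \ref{lema} identifies the $(\Gamma,a)$-MRA with a classical MRA of multiplicity $r$ and scaling vector $\Phi$, and all four equivalences are then quoted wholesale from Theorem 4.11 of \cite{CM}. You instead re-derive the content of that cited theorem internally: (a)$\iff$(b) from item (3) of the preceding proposition (after correctly observing that, $\{D_{\gamma}\phi\}$ being an ONB of $\V_0$, the system $\{D_{\gamma}\psi^l\}$ is an ONB of $\W_0$ exactly when $S_m$ is an ONB of $\V_1$); (b)$\iff$(c) from the explicit factorization $\mathcal{M}(\omega)=\mathcal{U}(a^*\omega)\bigl(E(\omega)\otimes I_r\bigr)$ with $E$ a generalized DFT; and (c)$\iff$(d) from the character-sum identity $\sum_{n}e^{-2\pi i k\cdot(a^*)^{-1}\varrho_n}=m\,\mathbf{1}_{k\in a\Lambda}$ together with uniqueness of Fourier coefficients. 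Your computations check out: the $\Lambda^*$-periodicity of the $u_{lh}$ legitimizes the factorization, and unitarity of $E(\omega)$ does indeed hinge on $\{d_h\}$ being a complete residue system modulo $a\Lambda$, which follows from the paper's standing tiling assumption on the digit set --- you were right to single this out as the delicate point, and along the way you silently correct two typos in the paper's displays (the $\sqrt{\omega}$ that should be $\sqrt{m}$, and the shift $Av$ that should be $av$). What each approach buys: the paper's citation is short and offloads the analysis to the established classical multiwavelet theory of \cite{CM}, while yours is self-contained at the level of this paper and makes visible exactly where the digit/tiling hypothesis enters; note, though, that your step (a)$\iff$(b) rests on the preceding proposition, which the paper also states without proof, so neither argument is fully from scratch. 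One small simplification: for the symmetry clause in (d) you invoke Proposition \ref{Sinv} and Theorem \ref{relacion}, but by the paper's definition of $\mathbb{V}_1$ the $(\Gamma,a)$-symmetry of the coefficients $\widetilde{c}_{l,k}$ is literally the statement $\Psi_l\in\mathbb{V}_1$, so no further argument is needed there.
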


\begin{proof}
Since $\Gamma$ is a splitting crystal group, then the system
\begin{equation*}
\{D_{\gamma}\psi^l\}_{\gamma\in\Gamma,\ l=1, \dots, m-1}=\{D_{g_i}\psi^{l}(x-k)\}_{k\in\Lambda,\ i=1, \dots, r,\ l=1, \dots, m-1}.
\end{equation*}
If we use the  notation, $\psi_{i}^l=D_{g_i}\psi^l$ as above,
\begin{equation*}
\{D_{\gamma}\psi^l\}_{\gamma\in\Gamma,\ l=1, \dots, m-1}=\{\psi_{i}^l(x-k)\}_{k\in\Lambda,\ i=1, \dots, r,\ l=1, \dots, m-1}.
\end{equation*}
Further, as $\{\V_j\}_{j\in\Zeta}$ is a $(\Gamma,a)-$MRA,
then by Corollary \ref{lema}, $\{\V_j\}_{j\in\Zeta}$ is a MRA of multiplicity $r$ with vector
scaling function $\Phi$. Therefore, the proof of the Theorem is immediate from Theorem 4.11 in \cite{CM}.
\end{proof}

Thus, once an $(\Gamma,a)$-MRA has been found, we can construct a wavelet basis for $L^2(\R^d)$ if we can construct a particular unitary matrix function $\mathcal{M}(\omega)$. For each $\omega$, the matrix $\mathcal{M}(\omega)$ is of size $rm\times rm$, and the first $r$ rows of this matrix are known. If the remaining rows can be completed so that $\mathcal{M}(\omega)$ is unitary a.e., then we can find the functions that generate the wavelet bases. Equivalently, we can try to solve the non-linear system of equations in $(d)$. The question of whether this completion can always be accomplished is a very difficult open question. The single function multivariate case, with dilation $2I_n$ is solved by the fundamental Lemma of Gr\"{o}chenig \cite{G}, and if $(m-1)r \geq \frac{d}{2}$, Cabrelli et.~al \cite{CG} proved that the completion can always be accomplished. This question is related to the {\em Extension Principle}, see recent work in \cite{APS16, FHS16}.


\begin{thebibliography}{9}

\bibitem{APS16} Atreas, Nikolaos D. ;  Papadakis, Manos ;  Stavropoulos, Theodoros . ``Extension principles for dual multiwavelet frames of $L^2(\R^s)$ constructed from multirefinable generators.''
 J. Fourier Anal. Appl.  \textbf{22}, no. 4, 854--877,  (2016).

\bibitem{B1} Bieberbach, L., "\"{U}ber die Bewegungsgruppen der Euklidischen R\"{a}ume I". Math. Ann. 70 (1910) 297-336.


\bibitem{BK} Blanchard, J.D., Krishtal, I.A., "Matricial filters and crystallographic composite dilation wavelets". Math. Comp. 81 (2012), 905-922.

\bibitem{BS} Blanchard, J.D., Steffen, K. "Crystallographic Haar-type composite dilation wavelets". In: Cohen, J., Zayed, A.I. (eds.) Wavelets and Multiscale Analysis: Theory and Applications, 83-108. Birkh\"{a}user, Boston (2011).

\bibitem{CM} Cabrelli C., Heil, C., Molter U., "Self-Similarity and Multiwavelets in Higher Dimensions". Memoirs Number 807, American Mathematical Society.

\bibitem{CG} Cabrelli C., Gordillo M., "Existence of multiwavelets in $\R^n$", Proceedings of the American Mathematical Society, Vol. 130, 5,(2002) , 1413-1424.

\bibitem{D} Daubechies, I. "Orthonormal bases of compactly supported wavelets." Comm. Pure and Appl. Math. \textbf{41} 909-996 (1988).

\bibitem{FHS16} Fan, Zhitao ;  Ji, Hui ;  Shen, Zuowei . ``Dual Gramian analysis: duality principle and unitary extension
 principle.''
 Math. Comp.  \textbf{85},  no. 297, 239--270,  (2016).

\bibitem{alfcar} Gonz\'alez, A.L., Moure, M.C., "Crystallographic Haar Wavelets", J. Fourier Anal. Appl. (2011), 17, 1119-1137.

\bibitem{gru} Gr\"{u}nbaum, B., Shephard, G.C., "Tilings and Patterns",
Freeman, New York, 1987.

\bibitem{G} Gr\"{o}chenig, K. "Analyse multi\'echelles et bases d'ondelettes", C.R. Acad. Sci. Paris S\'er. I Math., \textbf{305}, 13-15, (1987).

\bibitem{GLLWW} Guo, K., Labate, D., Lim, W., Weiss, G., Wilson, E. "Wavelets with composite dilations and their MRA properties". Appl. Comput. Harmon. Anal 20(2), 202-236 (2006).


\bibitem{KRWW} Kristhal, I.A., Robinson, B.D., Weiss, G.L., Wilson, E.N. "Some simple Haar-type wavelets in higher dimensions". J. Geom. Anal. 17(1), 87-96 (2007).


\bibitem{LW2} Lagarias, J.C., Wang, Y., "Haar bases for $L^2(\R^n)$ and algebraic number theory", J. Number Theory 57 (1996), 181-197.

\bibitem{MT} MacArthur, J., Taylor, K.F., "Wavelets with Crystal Symmetry Shifts." J. Fourier Anal. Appl. (2011), 17, 1109-1118.

\bibitem{M} Mallat, S. "Multiresolution approximations and wavelet orthonormal basis of $L^2(\R)$." Trans. Amer. Math. Soc. \textbf{315}, 69-87 (1989).

\bibitem{Me} Meyer, Y. "Wavelets and Operators". Cambridge University Press. Cambridge, 1992.


\bibitem{Z} Zassenhaus, H. "Bewels eines Satzes \"{u}berdiskrete gruppen". Abh. math. Sem. Hamburg, 12 (1938).

\end{thebibliography}
\end{document}